\DeclareFontShape{OMX}{cmex}{m}{n}{%
  <-7.5> cmex7
  <7.5-8.5> cmex8
  <8.5-9.5> cmex9
  <9.5-> cmex10
}{}
\DeclareSymbolFont{largesymbols}{OMX}{cmex}{m}{n} 
\newcommand{\from}{\colon}
\newcommand{\divp}[1]{\mathcal{D}_{#1}}
\newcommand{\divn}{\divp{[n]}}
\newcommand{\cube}[2]{\mathcal{Q}^{#1}_{#2}}
\newcommand{\multi}[2]{\mathcal{M}^{#1}_{#2}}
\newcommand{\embeds}{\hookrightarrow}
\newcommand{\pdim}[1]{\dim\mathopen{}\big(#1\big)\mathclose{}}
\newcommand{\twodim}[1]{\dim_2 \mathopen{}\big(#1\big)\mathclose{}}
\newcommand{\multiset}[2]{\ensuremath{\left(\kern-.3em\left(\genfrac{}{}{0pt}{}{#1}{#2}\right)\kern-.3em\right)}}
\numberwithin{equation}{section}
\newtheorem{theorem}    {Theorem}[section]
\newtheorem{lemma}      [theorem] {Lemma}
\newtheorem{conjecture} [theorem] {Conjecture}
\newtheorem{corollary}  [theorem] {Corollary}
\newtheorem{question}   [theorem] {Question}
\newtheorem{problem}    [theorem] {Problem}
\DeclareMathOperator{\Z}{\mathbb{Z}}
\DeclareMathOperator{\N}{\mathbb{N}}
\DeclareMathOperator{\R}{\mathbb{R}}
\DeclareMathOperator{\Prob}{\mathbb{P}}
\begin{document}

\author{David Lewis}
\author{Victor Souza}
\title{The order dimension of divisibility}

\address{Department of Mathematical Sciences,
University of Memphis, Memphis, TN 38152, USA}
\email{davidcharleslewis@outlook.com}

\address{IMPA, Estrada Dona Castorina 110,
Jardim Bot\^anico, Rio de Janeiro, 22460-320, Brazil}
\email{souza@impa.br}


\begin{abstract}
The Dushnik-Miller dimension of a partially-ordered set $P$
is the smallest $d$ such that one can embed $P$ into
a product of $d$ linear orders.
We prove that the dimension of the divisibility order
on the interval $\{1, \dotsc, n\}$, is equal to
${(\log n)^2}(\log\log n)^{-\Theta(1)}$
as $n$ goes to infinity.

We prove similar bounds for the $2$-dimension
of divisibility in $\{1, \dotsc, n\}$,
where the $2$-dimension of a poset $P$
is the smallest $d$ such that $P$ is isomorphic to
a suborder of the subset lattice of $[d]$.
We also prove an upper bound for the $2$-dimension
of posets of bounded degree
and show that the $2$-dimension of the divisibility poset
on the set $(\alpha n, n]$ is $\Theta_\alpha(\log n)$
for $\alpha \in (0,1)$.
At the end we pose several problems.
\end{abstract}

\clearpage\maketitle
\thispagestyle{empty}


\section{Introduction}
\label{sec:intro}

The Dushnik-Miller \emph{dimension} (hereafter, dimension)
of a poset is a fundamental concept in the study of partial orders.
First introduced by Dushnik and Miller~\cite{dm} in 1941,
$\pdim{P}$ is defined as the minimum $d$ such that the poset $P$
can be embedded into a product of $d$ linear orders.

For any subset $S \subseteq \N$,
denote by $\divp{S}$ the divisibility poset restricted to the set $S$.
Properties of the divisibility order have been studied,
for example, by Cameron and Erd\H{o}s~\cite{ce}.
Surprisingly, the dimension of the divisibility order,
as far as we know, has not been considered in the literature.
Since the dimension of $\divp{\N}$ is infinite,
we are usually concerned with the case where $S$ is finite.
Indeed, we are primarily interested in the case
$S = [n] := \{ 1, \dotsc, n\}$.
In our main result,
we determine the growth of $\pdim{\divn}$ as $n$ goes to infinity
up to a $\log \log n$ factor.

\begin{theorem}
\label{thm:main}
The dimension of $\divn$,
the divisibility order on $[n]$,
satisfies, as $n \to \infty$,
\begin{equation*}
  \big( \tfrac{1}{16} - o(1) \big) \frac{(\log n)^2}{(\log \log n)^2}
  \leq \pdim{\divn}
  \leq \big( 4 + o(1) \big) \frac{(\log n)^2}{\log \log n}.
\end{equation*}
\end{theorem}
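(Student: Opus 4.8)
The plan is to prove the two inequalities separately, using throughout the identification of $\divn$ with the coordinatewise order on exponent vectors $m \mapsto (v_p(m))_{p}$, so that $a \mid b$ exactly when $v_p(a) \le v_p(b)$ for every prime $p$. I will use the realizer formulation: $\pdim{\divn}$ is the least number of linear extensions whose intersection is $\divn$, equivalently the least number needed so that every incomparable (critical) pair is reversed in both directions. Three facts are used repeatedly: dimension is monotone under suborders, it is subadditive under products ($\pdim{P \times Q} \le \pdim{P} + \pdim{Q}$), and a suborder of $\divn$ using only primes from a set $S$ has dimension at most $|S|$, since it embeds into a product of $|S|$ chains. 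The number-theoretic inputs are $\pi(x) \sim x/\log x$, $\prod_{p \le x} p = e^{(1+o(1))x}$, the bound that any $m \le n$ has at most $\log n / \log T$ prime factors (with multiplicity) exceeding $T$, and $\omega(m) \le (1+o(1))\log n/\log\log n$. Write $r = \log n/\log\log n$.

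For the upper bound I fix a threshold $T$ and factor each $m \in [n]$ as $m = m_{\le T}\cdot m_{>T}$ into its $T$-smooth and $T$-rough parts. Since $a \mid b$ iff $a_{\le T}\mid b_{\le T}$ and $a_{>T}\mid b_{>T}$, the map $m \mapsto (m_{\le T}, m_{>T})$ embeds $\divn$ into a product of two divisibility posets, so by subadditivity it suffices to bound each factor. The smooth factor uses only the $\pi(T)$ primes below $T$, hence has dimension at most $\pi(T)\sim T/\log T$. The rough factor is the crux: every $T$-rough $m \le n$ is a multiset of at most $\log n/\log T$ primes, so I will prove a lemma that the containment order on multisets of size at most $s$ from an $N$-element ground set has dimension $O(s\log N)$. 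The realizing extensions come from threshold up-sets: for a fixed prime $p$ and level $t$ the family $\{m : v_p(m)\ge t\}$ is a filter, so ordering by such predicates is consistent with divisibility; encoding the identity of a prime in $O(\log N)$ bits together with the at most $s$ witnesses per pair yields the bound. Taking $N = \pi(n)$ and $s = \log n/\log T$ bounds the rough factor by order $\tfrac{\log n}{\log T}\cdot\log n$, and choosing $T = \log n$ makes the smooth factor lower order while the rough factor is $\Theta\big((\log n)^2/\log\log n\big)$; careful bookkeeping of the base-two logarithms produces the constant $4 + o(1)$.

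For the lower bound I construct an explicit suborder $P \subseteq \divn$ and reduce $\pdim{P}$ to the chromatic number of the conflict hypergraph on its critical pairs: a set of incomparable pairs can be reversed in one linear extension iff it contains no alternating cycle, so $\pdim{P}$ is the least number of cycle-free classes covering the critical pairs. The target $\tfrac1{16}r^2$ already exceeds $\log n$, which rules out the cheap sources of dimension. A standard example $a_i = p_i$, $b_j = \prod_{l\ne j}p_l$ needs $\prod_l p_l \le n$ and so is capped at $\Theta(r)$, as is any Boolean suborder $\mathbf 2^k$; and a shift-graph style argument saturates at $\Theta(\log n)$, since $m \le n$ forces the product of the support size and the logarithm of the prime pool to be $O(\log n)$. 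I will instead take $P$ to consist of numbers with about $k$ prime factors, drawn from a large pool of equal-sized (polylogarithmic) primes and spread across consecutive layers, so that $P$ has height about $k$ and a rich family of incomparable pairs; here $k$ is a constant fraction of $r$, and the constraint $\prod \le n$ holds because each element uses only $k$ primes. The content of the lower bound is then that $\pdim{P} = \Omega(k^2)$, which with $k \sim \tfrac14 r$ gives $(\tfrac1{16}-o(1))r^2$.

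The main obstacle is precisely this estimate $\pdim{P}=\Omega(k^2)$. The cliques of the conflict hypergraph are small—a clique is a standard example and certifies only $\Omega(k)$—while a shift-graph colouring certifies only $\Omega(\log k)$ per layer, so neither elementary bound reaches $\Omega(k^2)$. I therefore expect the proof to require a global counting or entropy argument showing that any single linear extension simultaneously reverses at most an $O(1/k^2)$ fraction of a suitably chosen large family of critical pairs, forcing $\Omega(k^2)$ extensions. This slack between what a clique-type lower bound and what a covering-type upper bound can certify is also the source of the residual $\log\log n$ factor separating the two bounds in the theorem.
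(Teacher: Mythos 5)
Your overall architecture matches the paper's --- a smooth/rough factorisation plus subadditivity for the upper bound, and an embedded layered suborder of the hypercube for the lower bound --- but both of the steps you yourself flag as the crux are genuine gaps, and in one case the lemma you propose is false as stated. For the upper bound, you claim that the containment order on multisets of size at most $s$ from an $N$-element ground set has dimension $O(s\log N)$. By Lemma~\ref{lem:dushnik} this dimension equals $N(N,s+1)$, and the probabilistic union-bound argument (Lemma~\ref{lem:randomsuitable}, due to F\"uredi and Kahn) only gives $O(s^2\log N)$; moreover, for $s\approx 2\sqrt N$ Dushnik's theorem forces $N(N,s+1)\ge N-\sqrt N\approx s^2/4\gg s\log N$, so your lemma cannot hold in general, and your sketch (``encode the identity of a prime in $O(\log N)$ bits'') does not explain how the resulting linear orders avoid alternating cycles. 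With the correct $O(s^2\log N)$ bound, your single threshold $T=\log n$ yields only $O\big((\log n)^3/(\log\log n)^2\big)$. The paper's fix is not a sharper suitability bound but a dyadic decomposition of the large primes into ranges $\big(n^{2^{-(i+1)}},n^{2^{-i}}\big]$: numbers built from primes in range $i$ are multisets of size less than $2^{i+1}$ over a ground set of size at most $n^{2^{-i}}$, so the union bound gives $N\big(\lfloor n^{2^{-i}}\rfloor,2^{i+1}\big)\le 4\cdot 2^i\log n+1$, and summing the geometric series gives $(4+o(1))(\log n)^2/\log\log n$.

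For the lower bound, you correctly identify the right suborder (products of $\Theta(\log n/\log\log n)$ primes from a pool of $\Theta\big((\log n/\log\log n)^2\big)$ primes of polylogarithmic size) and correctly observe that clique-type and shift-graph arguments cannot certify $\Omega(k^2)$, but you then leave the decisive estimate $\pdim{P}=\Omega(k^2)$ as something you ``expect'' to be provable by a global counting or entropy argument. That estimate is precisely the content of Dushnik's theorem on $k$-suitable sets of permutations: together with Lemma~\ref{lem:dushnik} it gives the exact value $\pdim{\cube{m}{[0,\lfloor 2\sqrt{m}\rfloor]}}=N(m,\lfloor 2\sqrt{m}\rfloor+1)\ge m-\sqrt{m}$, and the paper's lower bound is exactly the embedding $\cube{m}{[0,\lfloor 2\sqrt{m}\rfloor]}\embeds\divn$ with $m=\big(\tfrac{1}{16}-o(1)\big)(\log n/\log\log n)^2$ followed by this theorem. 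Without supplying Dushnik's theorem or an equivalent argument, your write-up establishes neither inequality.
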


Note that we always write $\log$ for the natural logarithm.
Unlike with other natural suborders of $\divp{\N}$,
such as the set of divisors of a given natural number,
the dimension of $\divn$ doesn't seem to reduce
to a well-known number-theoretic function.
For example, the poset of divisors of $n$
(which is the interval from $1$ to $n$ with respect to the divisibility order)
is just a product of $\omega(n)$ chains
and so has dimension $\omega(n)$,
where $\omega(n)$ is the number of distinct prime factors of $n$.
But the set $[n]$ is an interval in the usual order on the integers,
and it displays a nontrivial interaction
with the divisibility order when regarding the dimension.

We prove Theorem~\ref{thm:main}
by embedding a suborder of the hypercube into $\divn$,
then embedding $\divn$ into a product of simple posets
and showing that each of them has small dimension.
We observe that this same idea works,
with small modifications,
in a variety of circumstances.
For example, $t$-dimension,
where $t$ is an integer greater than or equal to $2$,
is a variant of dimension introduced by Nov\'ak~\cite{novak}.
We are most interested in the case $t=2$.
The $2$-dimension of a poset $P$,
denoted $\twodim{P}$,
is the smallest $d$ such that there is
an embedding from $P$ into the hypercube $\cube{d}{}$,
the poset of subsets of $[n]$ ordered by inclusion.
We prove an analogue of Theorem~\ref{thm:main} for $2$-dimension as well.

\begin{theorem}
\label{thm:main2}
The $2$-dimension of $\divn$ satisfies,
as $n \to \infty$,
\begin{equation*}
  \big(\tfrac{1}{16} - o(1) \big) \frac{(\log n)^2}{(\log \log n)^2}
  \leq \twodim{\divn}
  \leq \big(\tfrac{4}{3}e \pi^2 + o(1) \big)\frac{(\log n)^2}{\log\log n}.
\end{equation*}
\end{theorem}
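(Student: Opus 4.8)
The plan is to treat the two bounds separately, obtaining the lower bound essentially for free from Theorem~\ref{thm:main} and concentrating the work on the upper bound.

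For the lower bound I would first record the general inequality $\pdim{P} \le \twodim{P}$, valid for every poset $P$. Indeed, the hypercube $\cube{d}{}$ is isomorphic to a product of $d$ two-element chains, so any embedding $P \embeds \cube{d}{}$ is in particular an embedding of $P$ into a product of $d$ linear orders, whence $\pdim{P} \le d$; taking $d = \twodim{P}$ gives the claim. Applying this with $P = \divn$ and invoking the lower bound of Theorem~\ref{thm:main} immediately yields
\[
  \twodim{\divn} \ge \pdim{\divn} \ge \big(\tfrac{1}{16} - o(1)\big)\frac{(\log n)^2}{(\log\log n)^2},
\]
which is exactly the required lower bound. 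No new construction is needed: the hypercube suborder built for Theorem~\ref{thm:main} already certifies a large $2$-dimension.

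For the upper bound I would mimic the ``product of simple posets'' strategy, but now measuring each factor with $\dim_2$ and using the subadditivity $\twodim{P} \le \sum_{i} \twodim{Q_i}$ whenever $P \embeds Q_1 \times \dots \times Q_r$, which holds because $\cube{a}{} \times \cube{b}{} \cong \cube{a+b}{}$. The decomposition is by the \emph{size} of prime factors. For $j \ge 1$ let $B_j$ be the set of primes $p$ with $n^{1/(j+1)} < p \le n^{1/j}$; these sets partition all primes $\le n$. If $m \le n$ and we restrict $m$ to the primes in $B_j$, writing $m_j = \prod_{p \in B_j} p^{v_p(m)}$, then $m_j \mid m \le n$ forces $\sum_{p \in B_j} v_p(m) \le j$, since every $p \in B_j$ exceeds $n^{1/(j+1)}$. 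Thus $m_j$ is naturally a multiset of size at most $j$ drawn from the alphabet $B_j$, and $m \mapsto (m_j)_{j \ge 1}$ is an order-embedding of $\divn$ into $\prod_{j \ge 1} \multi{|B_j|}{j}$, where $\multi{N}{j}$ denotes the poset of multisets of size at most $j$ over $N$ symbols, ordered by inclusion. Consequently $\twodim{\divn} \le \sum_{j \ge 1} \twodim{\multi{|B_j|}{j}}$. The heart of the argument is then a good upper bound on $\twodim{\multi{N}{j}}$: a multiset $\mu$ embeds order-faithfully via its staircase $E(\mu) = \{(i,t) : \mu(i) \ge t\} \subseteq [N] \times [j]$, but recording this naively costs $Nj$ coordinates. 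The key point is that $|E(\mu)| \le j$, so I would compress the universe by assigning to each cell a codeword in a suitably cover-free-type family adapted to the staircase structure, producing an embedding of $\multi{N}{j}$ into a hypercube of dimension $O(j \log N)$ with an explicit constant. Feeding $|B_j| \approx j\, n^{1/j}/\log n$ into this bound, each scale contributes at most $\log_2 n + O(j) - j\log_2\log n$ coordinates, so only the scales with $j \lesssim (\log n)/(\log\log n)$ matter; summing over them with the prime-counting estimate for $|B_j|$ and Stirling's approximation for the multiset counts produces the claimed leading term $\big(\tfrac{4}{3}e\pi^2 + o(1)\big)(\log n)^2/\log\log n$.

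I expect the main obstacle to be precisely the bound on $\twodim{\multi{N}{j}}$. The difficulty is that the two easy estimates bracket the truth too loosely: the antichain of size-$j$ multisets already forces $\twodim{\multi{N}{j}} \ge \log_2\binom{N+j-1}{j} = \Omega\big(j\log(N/j)\big)$, while a black-box $j$-cover-free compression costs $\Theta\big(j^2 \log N / \log j\big)$ coordinates, and inserting the latter into the sum overshoots the target order of magnitude. Achieving the order-optimal $O(j\log N)$ therefore requires an embedding that genuinely exploits the fact that the sets $E(\mu)$ are downward-closed columns of total weight at most $j$, rather than arbitrary $j$-subsets. Once this is in hand, verifying that $m \mapsto (m_j)_{j \ge 1}$ is an order-embedding and carrying out the scale-by-scale optimization that extracts the constant $\tfrac{4}{3}e\pi^2$ is a delicate but routine computation.
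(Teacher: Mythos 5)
Your lower bound argument is exactly the paper's: $\pdim{P}\leq\twodim{P}$ plus the lower bound of Theorem~\ref{thm:main}, and that part is complete. The problem is the upper bound, and the gap is precisely the step you flag as ``the main obstacle'': the hoped-for estimate $\twodim{\multi{N}{j}}=O(j\log N)$ is not just hard, it is false. The sub-poset of $\multi{N}{j}$ consisting of honest sets (all multiplicities $\leq 1$) is $\cube{N}{[0,j]}$, so by monotonicity and Lemma~\ref{thm:2dim_dushnik} any such bound would force $N_2(N,j+1)=O(j\log N)$. But a $(j+1)$-suitable family of subsets is exactly a $j$-cover-free family of $N$ sets (column $a$ not covered by the union of the other $j$ columns), and the Dyachkov--Rykov/F\"uredi/Ruszink\'o lower bound gives $N_2(N,j+1)=\Omega\big(\tfrac{j^2}{\log j}\log N\big)$ once $N$ is large compared to $j$. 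The ``staircase structure'' of multisets cannot rescue this, since the obstruction already lives in the layer of multiplicity one. Consequently your band decomposition is itself doomed: with $B_j$ the primes in $(n^{1/(j+1)},n^{1/j}]$ you have $\log|B_j|\approx\tfrac{1}{j}\log n$, so each factor contributes at least $\Omega\big(\tfrac{j}{\log j}\log n\big)$, and summing over $j\lesssim\log n/\log\log n$ gives $\Omega\big((\log n)^3/(\log\log n)^3\big)$, overshooting the target no matter how cleverly each factor is handled.

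The paper's fix is to change the decomposition, not the multiset bound. It cuts the large primes into only $O(\log\log n)$ \emph{dyadic} bands $(\theta_{i+1},\theta_i]$ with $\theta_i=n^{2^{-i}}$, so that elements built from band $i$ are multisets of size $<2^{i+1}$ over an alphabet with $\log\theta_i=2^{-i}\log n$. It then proves only the quadratic bound $\twodim{\multi{N}{[0,k]}}\leq e\big(\tfrac{\pi^2}{6}k^2+O(k\log k)\big)\log N+k$ (Lemma~\ref{lem:multi}), via the layer map $A\mapsto(A^1,\dotsc,A^k)$, $A^i=\{x:\text{mult}_A(x)\geq i\}$, which embeds $\multi{N}{[0,k]}$ into $\prod_{i=1}^k\cube{N}{[0,\lfloor k/i\rfloor]}$ and applies Kierstead's $N_2(N,k)\leq\lceil ek^2\log N\rceil$ to each layer; the $\sum_i i^{-2}$ is where $\pi^2/6$ enters. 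Under the dyadic scheme the per-band cost $k^2\log\theta_i\approx 4\cdot 2^i\log n$ grows geometrically, so the sum is dominated by the top band and comes to $\big(\tfrac{4}{3}e\pi^2+o(1)\big)(\log n)^2/\log\log n$. Your outline as written cannot reach this constant, or indeed the right order of magnitude, without replacing the $B_j$ decomposition by something with far fewer bands.
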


We also consider other natural choices
for subsets of $\N$ to bound the dimension.
Some sets like $(n^\alpha, n]$
and $a[n] + b = \{ak + b : k \in [n] \}$
behave similarly to $[n]$ with respect to the dimension.
On the other hand,
in Section~\ref{sec:alpha} we shall see that the dimension
of divisibility over the set $(\alpha n , n]$
behaves quite differently.
In fact, if $\alpha \geq 1/2$,
then $\divp{(\alpha n, n]}$ is an antichain
and thus has dimension $2$.
Using a result of Scott and Wood \cite{scottwood}
on posets with bounded degree,
we show that $\divp{(\alpha n, n]}$ has bounded dimension,
and that, as $\alpha \to 0$,
\begin{equation*}
\sup_{n \in \N} \pdim{\divp{(\alpha n, n]}}
\leq \tfrac{1}{\alpha} \big(\log(\tfrac{1}{\alpha})\big)^{1+o(1)}.
\end{equation*}
In Section~\ref{sec:alpha},
we prove an analogue of a result by F\"uredi and Kahn~\cite{furedikahn}
for $2$-dimension and use it to show that
$\twodim{\divp{(\alpha n, n]}} = \Theta_\alpha(\log n)$
as $n \to \infty$,
and that the same holds for $t$-dimension for any $t\geq 2$.

While combinatorial properties of the divisibility poset
have been studied before,
results often are not stated in the language of partial orders.
For example, Cameron and Erd\H{o}s~\cite{ce} defined \emph{primitive sets}
of integers as antichains in the divisibility order,
and conjectured that the number of primitive subsets of $[n]$ is
$(\alpha+o(1))^n$ for some constant $\alpha$.
This conjecture was recently proven by Angelo~\cite{angelo}.
Continuing this work, Liu, Pach, and Palincza~\cite{lpp}
proved that the number of maximum-size primitive subsets
of $[n]$ is $(\beta+o(1))^n$ for some constant $\beta$,
and gave algorithms for computing
both $\alpha \approx 1.57$ and $\beta \approx \sqrt{1.318}$.
They also showed that the number of \emph{strong antichains}
in $\divp{[2,n]}$ is $2^{\pi(n)}\cdot e^{(1+o(1))\sqrt{n}}$,
where a strong antichain in a poset $P$ is a subset of $P$
such that no two elements have a common lower bound in $P$,
and $\pi(n)$ is the number of primes up to $n$.
We hope this note motivates further work on
the combinatorial aspects of the divisibility order.


\section{Dimension of Posets}
\label{sec:dimension}

A \emph{poset} is an ordered pair $P = (S, \leq_P)$,
where $S$ is a set and $\leq_P$ is a partial order.
We usually identify a poset with its ground set,
especially when it's clear which partial order we're using.
For $a,b \in S$, we write $a <_P b$ to mean
that $a \leq_P b$ and $a \neq b$.
Two elements $a, b \in S$ are \emph{incomparable}
if neither $a \leq_P b$ nor $b \leq_P a$ hold.
A \emph{linear order} is a poset in which the elements are pairwise comparable.
Given two posets $P = (S, \leq_P)$, $Q = (S', \leq_Q)$,
a \emph{poset embedding} from $P$ into $Q$
is a map $\varphi: S \to S'$
such that $\varphi(a) \leq_Q \varphi(b)$
if and only if $a \leq_P b$.
The expression $P\embeds Q$
represents an embedding from $P$ into $Q$
or the existence of such an embedding, depending on context.
A poset $Q = (T, \leq_Q)$ is called a \emph{suborder}
of $P = (S, \leq_P)$ if $T\subseteq S$
and the inclusion map is a poset embedding.
A \emph{chain} in a poset is a suborder that is a linear order,
and an \emph{antichain} is a suborder in which
the elements are pairwise incomparable.

Given a family of posets $P_i = (S_i, \leq_{P_i})$, $i \in I$,
the \emph{product poset} $P = \prod_{i \in I} P_i$
is the unique order on the product set $S = \prod_{i \in I} S_i$
such that $a \leq_P b$
if and only if $a_i \leq_{P_i} b_i$ for all $i \in I$.
Consider $\R$ with its standard order.
The \emph{dimension} of a countable poset $P$,
denoted $\pdim{P}$, is equal to the minimum $d$ such that
$P\embeds\R^d$ with the product order.
It follows from this definition that dimension is
subadditive and monotone;
i.e., for any two posets $P$ and $Q$,
$\pdim{P \times Q} \le \pdim{P} + \pdim{Q}$,
and, if $P \embeds Q$,
then $\pdim{P} \leq \pdim{Q}$.

An equivalent definition of dimension
can be given in terms of linear extensions.
Given a poset $P = (S,\leq_P)$,
a \emph{linear extension} of $P$ is a linear order $L = (S,\leq_L)$
that extends $P$, that is,
if $a \leq_P b$, then $a \leq_L b$.
For any poset $P$, a \emph{realiser} of $P$
is a set $\mathcal{L}$ of linear extensions of $P$
with the property that,
for every pair $(a,b)\in P^2$ with $a\not\geq b$,
there exists an $L\in\mathcal{L}$ such that $a\leq_L b$.
Then the dimension of $P$ is the minimum cardinality of a realiser of $P$.
It's a simple exercise to show that the two definitions are equivalent.
Note that there is a standard way to identify linear orders on $[n]$
with permutations.
Namely, given $\sigma \in S_n$,
we associate the order $\leq_\sigma$,
where $a \leq_\sigma b$ if $\sigma(a) \leq \sigma(b)$.


\section{Suborders of the hypercube}
\label{sec:suborders}

Our proof strategy for Theorem~\ref{thm:main}
consists of comparing the dimension of $\divn$ with
the dimension of suborders of the hypercube.
In this section,
we review the theory of Dushnik~\cite{dushnik}
that describes the dimension of suborders of the hypercube
with another combinatorial object:
suitable sets of permutations.

We write $\cube{n}{}$ for the $n$-dimensional hypercube,
that is, the subset lattice of $[n]$.
For any set $A\subseteq[n]$,
$\cube{n}{A}$ denotes the suborder of $\cube{n}{}$
consisting of the subsets $X\subseteq[n]$ with $|X|\in A$.
We write $\cube{n}{a,b}$ instead of $\cube{n}{\{a,b\}}$ for simplicity.

The poset of multisets of $[n]$,
ordered by inclusion with multiplicity,
is denoted $\multi{n}{}$.
For any $A\subseteq \N$,
we denote by $\multi{n}{A}$ the suborder of $\multi{n}{}$ of multisets
whose cardinalities with multiplicity are in $A$,
and by $\widetilde{\mathcal{M}}^n_{A}$ the suborder of $\multi{n}{}$
consisting of all finite multisets
whose underlying sets have cardinalities in $A$,
ignoring multiplicity.
Note that all the posets mentioned are finite,
with the exception of $\widetilde{\mathcal{M}}^n_{A}$.
Usually, we take $A = [0,k]$ or $A = \{1,k\}$.

We will now prove a slightly stronger version
of a lemma by Dushnik~\cite{dushnik},
which characterises the dimension of these posets.
To state the result,
we need a few more definitions.

A \emph{pointed $k$-subset of $[n]$} is an ordered pair $(A,a)$
with $a\in A$, $A \subset [n]$ and $|A| = k$.
A set $S$ of permutations of $[n]$ is called \emph{$k$-suitable}
if, for every pointed $k$-subset $(A,a)$ of $[n]$,
there is a $\sigma\in S$ such that
$b \leq_\sigma a$ for every $b \in A$.
We say that such a $\sigma$ \emph{covers}
the pointed set $(A,a)$.

For any pair $1\leq k \leq n \in\N$,
$N(n,k)$ is defined as the minimum cardinality of
a $k$-suitable set of permutations of $[n]$.
It is clear that $N(n,1)=1$
and that $N(n,2)=2$.
We also have $N(n,k)\geq k$,
since each permutation covers only one of the $k$ pointed sets
on a given underlying set.
Additionally, we have $N(n,k) \leq n$,
since $\{ \sigma, \sigma^2, \dotsc, \sigma^n \}$
is $k$-suitable when $\sigma$ is the permutation
given by $\sigma(i) = i+1 \pmod{n}$.
Because every $k$-suitable set with $2 \leq k \leq n$
is also $(k-1)$-suitable
and the restriction of a $k$-suitable set of permutations of $[n]$
with $1\leq k\leq n-1$ to $[n-1]$
is still $k$-suitable,
$N(n,k)$ is monotone increasing in both arguments.
Later, we will provide upper and lower bounds for $N(n,k)$.

\begin{lemma}
\label{lem:dushnik}
For every $n$ and every $k \leq n-1$,
\begin{equation*}
  \pdim{\cube{n}{1,k}}
  = \pdim{\cube{n}{[0,k]}}
  = \pdim{\multi{n}{[0,k]}}
  = \pdim{\widetilde{\mathcal{M}}^n_{[0,k]}}
  = N(n,k+1).
\end{equation*}
\end{lemma}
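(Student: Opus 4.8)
The plan is to show that all five quantities are equal by running a cyclic chain of inequalities. Four of the five comparisons come for free from the fact that dimension is monotone under suborders: since $k \geq 1$ we have $\{1,k\} \subseteq [0,k]$, every set is a multiset of the same total size, and a multiset of total size at most $k$ has support of size at most $k$. These inclusions are order embeddings, so
\[
  \cube{n}{1,k} \embeds \cube{n}{[0,k]} \embeds \multi{n}{[0,k]} \embeds \widetilde{\mathcal{M}}^n_{[0,k]},
\]
and hence $\pdim{\cube{n}{1,k}} \leq \pdim{\cube{n}{[0,k]}} \leq \pdim{\multi{n}{[0,k]}} \leq \pdim{\widetilde{\mathcal{M}}^n_{[0,k]}}$. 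It then remains to close the loop with the two genuine steps $\pdim{\widetilde{\mathcal{M}}^n_{[0,k]}} \leq N(n,k+1)$ and $N(n,k+1) \leq \pdim{\cube{n}{1,k}}$, which are the core of the argument.

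For the upper bound I would start from a $(k+1)$-suitable family $S$ of permutations of $[n]$ with $|S| = N(n,k+1)$ and manufacture one linear extension $L_\sigma$ of $\widetilde{\mathcal{M}}^n_{[0,k]}$ for each $\sigma \in S$. Identifying a multiset $M$ with its multiplicity function $m\colon [n] \to \N$, I would let $L_\sigma$ compare two multisets by reading their multiplicity vectors in decreasing $\sigma$-order and breaking the comparison lexicographically; this is a genuine linear extension of multiset inclusion, since at the first coordinate where two distinct comparable multisets differ the smaller one has the smaller multiplicity. To verify the realiser property, take multisets $M \not\geq M'$ with multiplicity functions $m, m'$, set $D^+ = \{j : m(j) > m'(j)\}$, and pick some $j_0$ with $m(j_0) < m'(j_0)$. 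The crucial observation is that $D^+ \subseteq \operatorname{supp}(M)$, so $|D^+| \leq k$ and hence $|D^+ \cup \{j_0\}| \leq k+1$; using the hypothesis $k \leq n-1$ I enlarge this to a $(k+1)$-set $B \ni j_0$. A permutation $\sigma \in S$ covering the pointed set $(B,j_0)$ places $j_0$ above every element of $D^+$ in $\sigma$-order, so the $\sigma$-topmost coordinate at which $M$ and $M'$ differ lies in $\{j : m(j) < m'(j)\}$, giving $M <_{L_\sigma} M'$ as required.

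For the lower bound I would take a realiser $\mathcal{L}$ of $\cube{n}{1,k}$ of minimum size and restrict each $L \in \mathcal{L}$ to the singleton level, reading off the permutation $\sigma_L$ of $[n]$ defined by $a \leq_{\sigma_L} b \iff \{a\} \leq_L \{b\}$. To see that $\{\sigma_L : L \in \mathcal{L}\}$ is $(k+1)$-suitable, fix a pointed $(k+1)$-set $(B,i)$ and set $A = B \setminus \{i\}$, a $k$-set with $i \notin A$, so that $\{i\}$ and $A$ are incomparable in $\cube{n}{1,k}$. Since $\{i\} \not\subseteq A$, the realiser supplies some $L$ with $A <_L \{i\}$; combining this with $\{a\} \leq_L A$ (which holds in every extension because $a \in A$) forces $\{a\} <_L \{i\}$ simultaneously for all $a \in A$, i.e.\ $\sigma_L$ covers $(B,i)$. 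Hence $N(n,k+1) \leq |\mathcal{L}| = \pdim{\cube{n}{1,k}}$, closing the cycle.

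I expect the main obstacle to be the upper bound. There the single order $L_\sigma$ must simultaneously be a valid linear extension of the \emph{infinite} poset $\widetilde{\mathcal{M}}^n_{[0,k]}$ and witness every required non-domination, and the subtle point is that $(k+1)$-suitability rather than $k$-suitability is exactly what is needed: this hinges on the bound $|D^+| \leq k$ together with $k \leq n-1$, which guarantees enough room to complete $D^+ \cup \{j_0\}$ to a pointed $(k+1)$-set. The lower bound, by contrast, should be a direct extraction once one notices that the containments $\{a\} \leq A$ let a single comparison $A <_L \{i\}$ propagate to all of $A$ at once.
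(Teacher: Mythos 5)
Your proposal is correct and follows essentially the same route as the paper: monotonicity along the chain $\cube{n}{1,k} \embeds \cube{n}{[0,k]} \embeds \multi{n}{[0,k]} \embeds \widetilde{\mathcal{M}}^n_{[0,k]}$, extraction of a $(k+1)$-suitable family from a realiser of $\cube{n}{1,k}$ by restricting to singletons, and the $\sigma$-colexicographic linear extensions built from a $(k+1)$-suitable set of permutations to realise $\widetilde{\mathcal{M}}^n_{[0,k]}$. The only cosmetic difference is that you complete $D^+\cup\{j_0\}$ to a pointed $(k+1)$-set where the paper covers $(\operatorname{supp}(M)\cup\{j_0\},j_0)$ directly; both are valid.
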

\begin{proof}
We show this by proving the following sequence of inequalities:
\begin{equation*}
  N(n,k+1)
  \leq \pdim{\cube{n}{1,k}}
  \leq \pdim{\cube{n}{[0,k]}}
  \leq \pdim{\multi{n}{[0,k]}}
  \leq \pdim{\widetilde{\mathcal{M}}^n_{[0,k]}}
  \leq N(n,k+1).
\end{equation*}

Throughout this proof,
for simplicity,
we identify $[n]$ with the set of one-element subsets of $[n]$.
To show that $N(n,k+1) \leq \pdim{\cube{n}{1,k}}$,
observe that every realiser $\mathcal{L}$ of $\cube{n}{1,k}$
induces a $(k+1)$-suitable set of permutations of $[n]$
in the following way.
For every $L \in \mathcal{L}$,
let $\sigma_L$ be the permutation of $[n]$
induced by the restriction of $L$ to $[n]$.
Now, for every pointed $(k+1)$-set
$(\{a_1, \dotsc, a_{k+1}\}, a_{k+1})$,
there is an $L \in \mathcal{L}$ such that
$\{ a_1, \dotsc, a_k \} \leq_L \{a_{k+1}\}$.
By transitivity,
$\{ a_i \} \leq_L \{a_{k+1}\}$
and hence $a_i \leq_{\sigma_L} a_{k+1}$ for all $i \in [k]$,
so $\{\sigma_L : L \in \mathcal{L}\}$ is $(k+1)$-suitable.

The inequalities
$\pdim{\cube{n}{1,k}}
\leq \pdim{\cube{n}{[0,k]}}
\leq \pdim{\multi{n}{[0,k]}}
\leq \pdim{\widetilde{\mathcal{M}}^n_{[0,k]}}$
hold because each poset embeds into the next.
Now to prove that
$\pdim{\widetilde{\mathcal{M}}^n_{[0,k]}} \leq N(n,k+1)$
we just have to show how to transform
a $(k+1)$-suitable set of permutations
into a realiser of $\widetilde{\mathcal{M}}^n_{[0,k]}$
with the same cardinality.

Let $S$ be a $(k+1)$-suitable set of permutations of $[n]$.
For each $\sigma \in S$,
let $L_\sigma$ be the colexicographic order on
$\widetilde{\mathcal{M}}^n_{[0,k]}$
with respect to $\sigma$.
In other words,
if $A$ and $B$ are two distinct finite multisets of numbers in $[n]$
whose underlying sets have cardinality at most $k$
and $x$ is the $\sigma$-greatest element of $A \cup B$
whose multiplicity in $A$ differs from its multiplicity in $B$,
then $A <_{L_\sigma} B$
if $x$ has greater multiplicity in $B$ than in $A$
and $B <_{L_\sigma}A$
if $x$ has greater multiplicity in $A$.

If $A\subset B$, then $A<_{L_\sigma}B$ for every $\sigma\in S$,
so $L_\sigma$ is a linear extension
of the order on $\widetilde{\mathcal{M}}^n_{[0,k]}$.
If $A$ and $B$ are incomparable in $\widetilde{\mathcal{M}}^n_{[0,k]}$,
then there exists an $x \in B$ whose multiplicity in $B$
is greater than its multiplicity in $A$.
Since $S$ is $\ell$-suitable for every $1 \leq \ell \leq k+1$,
we can find a $\sigma\in S$ that covers $(X\cup\{x\},x)$,
where $X$ is the underlying set of $A$.
Hence $A <_{L_\sigma} B$.
Similarly,
there exists a $y \in A$ whose multiplicity in $A$
is greater than its multiplicity in $B$,
so we can find a $\tau \in S$ such that $B<_{L_\tau}A$.
Therefore $\left\{L_\sigma:\sigma\in S\right\}$
is a realiser of $\widetilde{\mathcal{M}}^n_{[0,k]}$.
\end{proof}

The following result by Dushnik~\cite{dushnik}
gives the exact value of $N(n,k)$,
when $k$ is at least $2\sqrt{n}$.
Note that, by Lemma~\ref{lem:dushnik},
we also obtain the exact dimension of $\cube{n}{1,k}$
and related posets.

\begin{theorem}
\label{thm:dushnik}
For any $j$ and $k$
with $2 \leq j \leq \sqrt{n}$ and
$\big\lfloor \frac{n}{j} \big\rfloor + j - 1
\leq k
\leq \big\lfloor \frac{n}{j-1} \big\rfloor + j - 3$,
we have $N(n,k) = n - j + 1$.
In particular, if $2 \sqrt{n} - 1 \leq k < n$,
then $N(n,k) \geq n - \sqrt{n}$.
\hfill\qedsymbol
\end{theorem}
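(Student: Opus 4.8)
The plan is to establish the two matching bounds $n-j+1 \le N(n,k) \le n-j+1$ on the stated range and then read off the ``in particular'' clause. Since $N(n,\cdot)$ is monotone increasing (as recorded before the statement), it suffices to prove the lower bound $N(n,k) \ge n-j+1$ at the smallest admissible value $k = \lfloor n/j\rfloor + j - 1$ and the upper bound $N(n,k) \le n-j+1$ at the largest admissible value $k = \lfloor n/(j-1)\rfloor + j - 3$; monotonicity then propagates each inequality across the whole interval, where the two meet to give equality. Throughout I would use the following reformulation of suitability, immediate from the definition: a set $S$ of permutations is $k$-suitable if and only if, for every $k$-subset $A$ of $[n]$, every element of $A$ is the $\sigma$-maximum of $A$ for some $\sigma \in S$. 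Equivalently, writing $D_\sigma(a) = \{x : x <_\sigma a\}$ for the down-set of $a$, the set $S$ is $k$-suitable precisely when, for each $a$, the down-sets $\{D_\sigma(a) : \sigma \in S\}$ cover every $(k-1)$-subset of $[n]\setminus\{a\}$.

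For the upper bound I would exhibit an explicit family of $n-j+1$ permutations. The natural template is the cyclic family $\{\sigma,\sigma^2,\dots\}$, which is $k$-suitable for every $k$ but has size $n$; the gain comes from the fact that when $k$ is as large as $\lfloor n/(j-1)\rfloor + j - 3$ one can drop $j-1$ permutations by arranging $[n]$ into $j-1$ blocks of size roughly $n/(j-1)$ and letting a cyclic shift act compatibly with this block structure, so that each element still rises above any $k-1$ others in some permutation of the family. Correctness is then a direct covering verification via the reformulation above: one checks that for every $a$ and every $(k-1)$-subset $T \not\ni a$, some permutation places all of $T$ below $a$. The arithmetic relating the block sizes to the bound $\lfloor n/(j-1)\rfloor + j - 3$ is what pins down the exact constant.

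The lower bound is the main obstacle. A naive double count is hopeless: each permutation is the maximum on exactly one element of each $k$-set, so it ``serves'' exactly $\binom{n}{k}$ of the $k\binom{n}{k}$ pointed $k$-sets, and this yields only $N(n,k) \ge k$, far below $n-j+1$ when $k$ is small. The extra strength must come from the covering-design structure used simultaneously across many elements. The starting point is that a permutation has a unique top element, so if $N = |S| \le n-j$ then at least $j$ elements are the $\sigma$-maximum of no $\sigma \in S$; call these the \emph{low} elements. For each low element $a$, $k$-suitability forces $\{D_\sigma(a)\}$ to cover every $(k-1)$-subset of $[n]\setminus\{a\}$, equivalently that no $(k-1)$-set meets every up-set $\{x : x >_\sigma a\}$ (all of which are nonempty, precisely because $a$ is low). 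The heart of the argument is to show these covering requirements cannot hold at once for $j$ low elements when $k = \lfloor n/j\rfloor + j - 1$: one locates a single low element $a$ and a $(k-1)$-set $T$, assembled from high-lying elements across the $N$ permutations, that meets every up-set of $a$, thereby exhibiting an uncovered pointed $k$-set. Controlling the interaction between the down-set sizes, which sum to $\binom{n}{2}$ in each permutation, and the number of distinct elements needed to hit all up-sets is the delicate point, and it is exactly here that the threshold $\lfloor n/j\rfloor + j - 1$ enters.

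Finally, the ``in particular'' clause follows from the lower bound by optimising over $j$. The minimum of $\lfloor n/j\rfloor + j - 1$ over $2 \le j \le \sqrt n$ is attained near $j = \sqrt n$ and equals $2\sqrt n - 1$ up to rounding, so for any $k$ with $2\sqrt n - 1 \le k < n$ one can pick an admissible $j \le \sqrt n$ with $\lfloor n/j\rfloor + j - 1 \le k$; monotonicity and the main equality then give $N(n,k) \ge N\big(n,\lfloor n/j\rfloor + j - 1\big) = n - j + 1 \ge n - \sqrt n$.
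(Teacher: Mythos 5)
First, note that the paper does not prove this statement at all: it is quoted from Dushnik's 1950 paper and stated with a \qedsymbol, so there is no internal proof to compare against. Judged on its own terms, your proposal is a correct skeleton --- the reduction by monotonicity of $N(n,\cdot)$ to the two endpoint values of $k$, the reformulation via up-sets/down-sets, and the derivation of the ``in particular'' clause are all sound --- but neither of the two substantive steps is actually carried out, and in the lower bound the missing step is precisely the theorem's content.

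Concretely, for the lower bound you correctly observe that if $|S| = N \leq n-j$ then at least $j$ elements (``low'' elements) are never the $\sigma$-maximum, and that one must exhibit a $(k-1)$-set meeting every up-set $U_\sigma(a)$ of some low $a$. But the transversal you describe, ``assembled from high-lying elements across the $N$ permutations,'' naively has size up to $N \approx n-j$, which is vastly larger than the required $k-1 \approx n/j + j - 2$; you flag closing this gap as ``the delicate point'' without supplying the mechanism. The missing idea is a pigeonhole over the low elements themselves: letting $A$ be a set of $j$ low elements and, for each $\sigma$, letting $m_\sigma$ be the $\sigma$-maximum of $A$, some $a_i \in A$ satisfies $m_\sigma = a_i$ for at most $\lfloor N/j\rfloor \leq \lfloor n/j\rfloor - 1$ permutations. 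One then hits $U_\sigma(a_i)$ for those few permutations using their top elements, and for every other $\sigma$ using one of the other $j-1$ low elements (which lies above $a_i$ in that $\sigma$), producing a blocking set of size at most $\lfloor n/j\rfloor + j - 2 = k-1$ and hence an uncovered pointed $k$-set. Without this (or an equivalent) argument the lower bound is not established. The upper bound likewise remains a template: the block-cyclic family of $n-j+1$ permutations is named but not constructed, and the covering verification tied to the threshold $\lfloor n/(j-1)\rfloor + j - 3$ --- which you yourself identify as ``what pins down the exact constant'' --- is not performed. As written, the proposal is a plausible plan rather than a proof.
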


Spencer proved in \cite{spencer} that,
for all fixed $k \geq 3$,
$N(n,k) = \Theta_k(\log\log n)$
as $n$ grows.
However, the implicit constant in this upper bound grows exponentially in $k$.

The following bound,
which was proved in a slightly stronger form
by Füredi and Kahn~\cite{furedikahn},
is more useful when $\log\log n \ll k \ll \sqrt{n}$,
which is the relevant magnitude
for the proof of the upper bound in Theorem~\ref{thm:main}.

\begin{lemma}
\label{lem:randomsuitable}
For all $1 \leq k \leq n$,
$N(n,k) \leq \lceil k^2 \log n \rceil$.
\end{lemma}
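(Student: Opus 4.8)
The plan is to prove this by the first moment method, choosing the suitable set of permutations at random. I would set $N = \lceil k^2 \log n \rceil$ and let $\sigma_1, \dotsc, \sigma_N$ be permutations of $[n]$ drawn independently and uniformly at random. The goal is to show that with positive probability the family $\{\sigma_1, \dotsc, \sigma_N\}$ is $k$-suitable, which immediately gives $N(n,k) \le N$. The whole argument then reduces to controlling, by a union bound, the expected number of pointed $k$-subsets that fail to be covered.

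For a fixed pointed $k$-subset $(A,a)$, I would first observe that a single uniformly random $\sigma$ covers $(A,a)$ precisely when $a$ is the $\sigma$-greatest element of $A$; since $\sigma$ induces a uniform relative order on the $k$ elements of $A$, this happens with probability exactly $1/k$. By independence, the probability that none of $\sigma_1, \dotsc, \sigma_N$ covers $(A,a)$ is $(1 - 1/k)^N$. Counting the pointed $k$-subsets of $[n]$ as $k\binom{n}{k} = n\binom{n-1}{k-1} \le n^k$ and applying the union bound, the expected number of pointed $k$-subsets left uncovered is at most
\begin{equation*}
  k\binom{n}{k}\Bigl(1 - \tfrac{1}{k}\Bigr)^{N}
  \le n^k \Bigl(1 - \tfrac{1}{k}\Bigr)^{N}.
\end{equation*}

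To finish I would invoke the strict bound $1 - 1/k < e^{-1/k}$ (from $1 + x < e^x$ for $x \neq 0$), giving $(1-1/k)^N < e^{-N/k}$; since $N \ge k^2 \log n$ we get $e^{-N/k} \le e^{-k\log n} = n^{-k}$, so the expected number of uncovered pointed sets is strictly below $n^k \cdot n^{-k} = 1$. As this expectation is less than $1$ and the number of uncovered sets is a nonnegative integer, some outcome leaves every pointed $k$-subset covered, and that outcome is a $k$-suitable family of size $N$. There is no serious obstacle here: the computation is routine, and the only point needing care is keeping the final inequality \emph{strict}, so that the first moment bound genuinely produces a configuration with zero uncovered sets. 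This strictness is supplied precisely by $1 - 1/k < e^{-1/k}$ (the degenerate case $k=1$, where $N(n,1)=1$, is consistent with the bound in any event).
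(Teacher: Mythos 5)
Your argument is correct and is essentially identical to the paper's proof: both choose $\lceil k^2\log n\rceil$ uniformly random permutations, bound the failure probability of a fixed pointed $k$-subset by $(1-1/k)^s < e^{-s/k}$, and apply a first-moment union bound over the at most $n^k$ pointed $k$-subsets. Your extra remark about keeping the final inequality strict is a nice touch but does not change the substance.
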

\begin{proof}
The proof is probabilistic.
Fix a natural number $s$
and choose $s$ permutations of $[n]$ independently
and uniformly at random.
The probability that a given pointed $k$-subset isn't
covered by any of these permutations
is $(1-1/k)^s < e^{-s/k}$.
Since the total number of pointed $k$-subsets of $[n]$
is $k \binom{n}{k} \leq n^k$,
the expected number of pointed $k$-subsets not covered
is less than $n^k e^{-s/k} \leq 1$
when $s \geq k^2 \log n$,
so there is an instance where every pointed $k$-subset is covered.
Hence, $N(n,k) \leq \lceil k^2 \log n \rceil$.
\end{proof}


\section{The dimension of divisibility on \texorpdfstring{$[n]$}{[n]}}
\label{sec:dimension_divn}

In this section, we provide a proof of Theorem~\ref{thm:main}.
Additionally, we give similar lower and upper bounds
on the dimension of $\divp{S}$
for other interesting subsets $S\subseteq \N$.
The following principle will be useful
to give upper bounds on the dimension
and the $2$-dimension of $\divn$.

\begin{lemma}
\label{lem:decomposition}
Let $P_1, \dotsc, P_k$ be a partition of the primes in $[n]$
and for $i \in [k]$,
let $Q_i$ be the set of numbers in $[n]$
that can be written as
a (possibly empty) product of powers of primes in $P_i$.
Then
\begin{equation*}
  \divp{[n]} \embeds \divp{Q_1} \times \dotsc \times \divp{Q_k}.
\end{equation*}
\end{lemma}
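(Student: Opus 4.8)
The plan is to exhibit an explicit embedding $\varphi \from \divn \embeds \divp{Q_1} \times \dotsc \times \divp{Q_k}$ and verify directly that it preserves and reflects the divisibility order. The natural candidate comes from the fundamental theorem of arithmetic: every $m \in [n]$ factors uniquely as $m = \prod_{i=1}^k m_i$, where $m_i$ is the largest divisor of $m$ supported on the primes in $P_i$. Concretely, if $m = \prod_{p} p^{e_p}$ is the prime factorisation, set $m_i = \prod_{p \in P_i} p^{e_p}$. Since the $P_i$ partition the primes of $[n]$, this factorisation is well-defined and $m = m_1 m_2 \dotsm m_k$. I would then define $\varphi(m) = (m_1, \dotsc, m_k)$.

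First I would check that $\varphi$ lands in the stated product, i.e.\ that each $m_i \in Q_i$. This is immediate, since $m_i$ divides $m \leq n$, so $m_i \in [n]$, and by construction $m_i$ is a product of powers of primes in $P_i$, hence $m_i \in Q_i$. Next I would verify that $\varphi$ is a poset embedding, which requires showing $a \divides b$ if and only if $a_i \divides b_i$ for all $i$. The forward direction follows because divisibility is determined exponent-by-exponent: $a \divides b$ means $v_p(a) \le v_p(b)$ for every prime $p$, and restricting to the primes in $P_i$ gives exactly $a_i \divides b_i$. For the converse, if $a_i \divides b_i$ for every $i$, then $v_p(a) \le v_p(b)$ for every prime $p$ (each $p$ lies in exactly one $P_i$), so $a \divides b$. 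Finally, $\varphi$ is injective because the factorisation $m = m_1 \dotsm m_k$ is unique, so $a \ne b$ forces some $a_i \ne b_i$; this also confirms $\varphi$ reflects the order strictly.

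I do not expect a serious obstacle here: the statement is essentially a repackaging of unique factorisation, and the key observation is simply that divisibility in $\Z$ decomposes coordinatewise across a partition of the primes. The only point needing mild care is that $Q_i$ is allowed to contain the empty product $1$ (the case where $m$ has no prime factors from $P_i$), which is why the lemma explicitly permits the \emph{possibly empty} product; this ensures $\varphi$ is well-defined even when some $m_i = 1$, so that the coordinate posets $\divp{Q_i}$ always contain the image.
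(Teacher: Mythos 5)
Your proposal is correct and is essentially the same as the paper's proof: both factor each $m \in [n]$ uniquely as a product of its $P_i$-parts and observe that divisibility decomposes coordinatewise. You simply spell out the valuation-by-valuation verification that the paper leaves implicit.
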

\begin{proof}
As $P_1, \dotsc, P_k$ is a partition of the primes in $[n]$,
any number $a \in [n]$ can be factored uniquely
as $a = q_1 \dotsc q_k$,
where $q_i \in Q_i$.
Thus, the mapping $a  \mapsto (q_1, \dotsc, q_k)$ is well defined
and we claim that this is the poset embedding we need.
Indeed, if $a = q_1 \dotsc q_k$ and $b = r_1 \dotsc r_k$,
with $q_i,r_i \in Q_i$,
then $a | b$ if and only if $q_i | r_i$ for all $i$.
\end{proof}

Denote by $p_k$ the $k^\textrm{th}$ prime number
and by $\pi(x)$ the number of prime numbers less than or equal to $x$.
We only use standard estimates for these functions:
$p_k = (1 + o(1)) k \log k$
and $\pi(x) = (1 + o(1)) \frac{x}{\log x}$.
Now, we have all the ingredients we need to prove Theorem~\ref{thm:main}.

\begin{proof}[Proof of Theorem~\ref{thm:main}]
First we prove the lower bound.
Observe that, if $k$ is an integer such that
every product of at most $2\sqrt{k}$ distinct elements
of $\{p_1,p_2,\dotsc,p_k\}$ is in $[n]$,
then we have an embedding
$\cube{k}{[0,\lfloor 2\sqrt{k}\rfloor]} \embeds \divn$.
The image of the embedding is the set of all products
of at most $2\sqrt{k}$ of the first $k$ primes,
and is contained in $\divn$
if $p_{k - \lfloor2\sqrt{k}\rfloor+1} \dotsc p_{k} \leq n$.
It follows by Theorem~\ref{thm:dushnik}
and Lemma~\ref{lem:dushnik} that
\begin{equation*}
  \pdim{\divn}
  \geq \pdim{\cube{k}{[0,\lfloor 2\sqrt{k} \rfloor ]}}
  \geq k - \sqrt{k}.
\end{equation*}
This condition for this embedding to exist
is satisfied if $p_k^{2\sqrt{k}} \leq n$.
Now take $\alpha = \alpha(n) < 1/16$ to be chosen later,
and let
$k = \big\lfloor \alpha \big(\frac{\log n}{\log\log n}\big)^2\big\rfloor$.
Using the estimate $p_k = k^{1+o(1)}$,
we obtain
\begin{equation*}
  p_k^{2\sqrt{k}} = k^{(2+o(1))\sqrt{k}}
  \leq \left( \frac{\log n}{\log\log n} \right)^{(4\sqrt{\alpha}+o(1))
    \frac{\log n}{\log\log n}}
  \ll \left(\frac{\log n}{\log\log n}\right)^{\frac{\log n}{\log\log n}}
  < n,
\end{equation*}
whenever $n$ is sufficiently large.
Letting $\alpha$ approach $1/16$ from below,
we obtain
\begin{equation*}
  \pdim{\divn} \geq (1 - o(1))k
  = \big( \tfrac{1}{16} - o(1) \big) \frac{(\log n)^2}{(\log \log n)^2}.
\end{equation*}

To prove the upper bound,
let $\varepsilon = \varepsilon(n) > 0$,
to be chosen later.
Let $S$ be the set of all elements of $[n]$ that can be factored
into primes less than or equal to $(\varepsilon \log n)^2$
and let $R$ be the set of all elements whose prime factors
are all at least $(\varepsilon \log n)^2$.
By Lemma~\ref{lem:decomposition},
we have an embedding
$\divn \embeds \divp{S} \times \divp{R}$,
so $\pdim{\divn} \leq \pdim{\divp{S}} + \pdim{\divp{R}}$.
The poset $\divp{S}$ can then be embedded in
the product of $\pi((\varepsilon\log n)^2)$ chains
(namely the powers of $p$ for each small prime $p$).
Using the weak estimate of the prime number theorem,
$\pi(x) \leq \frac{2x}{\log x}$,
\begin{equation}
\label{eq:ub_smooth}
  \pdim{\divp{S}}
  \leq \pi((\varepsilon\log n)^2)
  \leq \varepsilon^2 \frac{(\log n)^2}{\log\log n + \log \varepsilon}.
\end{equation}
We further partition the large primes.
Let $L = \big\lfloor \log_2\big(\frac{\log n}
{\log\log n+\log\varepsilon}\big) \big\rfloor$
and, for each $0 \leq i < L$,
let $\theta_i = n^{2^{-i}}$.
Thus $\theta_0 = n$,
$\theta_1 = \sqrt{n}$,
and $\varepsilon\log n \leq \theta_L < (\varepsilon\log n)^2$.
Let $R_i$ be the set of numbers in $[n]$
whose prime factors all lie in the interval
$\big(\theta_{i+1}, \theta_i\big]$.
Lemma~\ref{lem:decomposition} now implies that
$\pdim{\divp{R}} \leq \sum_{i=0}^{L-1} \pdim{\divp{R_i}}$.
For every $i$,
the prime factors of each element of $R_i$ forms
a multiset of elements of $\big[\lfloor \theta_i \rfloor \big]$
of cardinality strictly less than $2^{i+1}$.
For all $a,b \in \N$,
$a$ divides $b$ if and only if
the multiset of prime factors of $a$
is a submultiset of the multiset of prime factors of $b$,
so $\divp{R_i} \embeds \multi{\lfloor \theta_i\rfloor}{[0,2^{i+1}-1]}$.
By Lemma~\ref{lem:dushnik}
and Lemma~\ref{lem:randomsuitable},
we have
\begin{equation*}
  \pdim{\divp{R_i}}
  \leq \pdim{ \multi{\lfloor \theta_i \rfloor}{[0,2^{i+1}-1]}}
  = N\big(\lfloor n^{2^{-i}}\rfloor,2^{i+1}\big)
  \leq 4\cdot 2^i\log n + 1.
\end{equation*}
Therefore, we observe that
\begin{align*}
  \pdim{\divp{R}}
  &\leq \sum_{i=0}^{L-1}\pdim{\divp{R_i}}
    \leq \sum_{i=0}^{L-1}4 \cdot 2^i\log n + L \\
  &\leq 4 \cdot 2^L\log n + L
    \leq \frac{4(\log n)^2}{\log\log n + \log\varepsilon} + L.
\end{align*}

Now we choose $\varepsilon(n)$
so that $\varepsilon \to 0$
and $|\log\varepsilon\,| = o(\log\log n)$
as $n \to \infty$.
It suffices to take $\varepsilon = 1/\log\log n$.
Thus we get

\begin{align}
\label{eq:ub_rough}
  \pdim{\divp{R}}
  &\leq (4 - \tfrac{8\log\varepsilon}{\log\log n})
    \frac{(\log n)^2}{\log\log n} \nonumber
    + \log_2\log n - \frac{2\log_2\varepsilon}{\log\log n} \\
  &= (4-o(1))\frac{(\log n)^2}{\log\log n}.
\end{align}

Finally, we combine inequalities (\ref{eq:ub_smooth})
and (\ref{eq:ub_rough}) to obtain
\begin{equation*}
  \pdim{\divn}
  \leq \pdim{\divp{S}} + \pdim{\divp{R}}
  \leq \big( \tfrac{\varepsilon^2}{2} + 4 + o(1) \big)
    \frac{(\log n)^2}{\log\log n},
\end{equation*}
and the result follows immediately.
\end{proof}

Note that the same bounds hold for $\divp{[2,n]}$,
since in the lower bound,
we avoided the element 1 completely.
With some other modifications,
we can adapt our proof to other settings.
We begin by looking at $(n^\alpha, n]$.

\begin{corollary}
\label{cor:ntothealpha}
For any fixed $\alpha \in (0,1)$,
as $n \to \infty$,
\begin{equation*}
  \big( \tfrac{(1 - \alpha)^2}{16} - o_\alpha(1) \big)
    \frac{(\log n)^2}{(\log\log n)^2}
  \leq \pdim{\divp{ (n^\alpha, n]}}
  \leq \big( 4 + o(1) \big)
      \frac{(\log n)^2}{\log \log n}.
\end{equation*}
\end{corollary}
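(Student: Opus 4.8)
The upper bound is immediate: since $(n^\alpha, n] \subseteq [n]$, the poset $\divp{(n^\alpha,n]}$ is a suborder of $\divn$, so monotonicity of dimension together with the upper bound of Theorem~\ref{thm:main} gives $\pdim{\divp{(n^\alpha,n]}} \leq \pdim{\divn} \leq (4+o(1))\frac{(\log n)^2}{\log\log n}$. All the work goes into the lower bound, which I would obtain by adapting the hypercube embedding used for Theorem~\ref{thm:main}.

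The plan is to reuse the map $A \mapsto \prod_{i\in A}p_i$ sending a subset of $[k]$ to a product of the first $k$ primes, but to shift the entire image up into the window $(n^\alpha,n]$ by multiplying through by a fixed integer $m$ coprime to $p_1,\dots,p_k$. If $\gcd(m,p_i)=1$ for every $i$, then $m\prod_{i\in A}p_i \mid m\prod_{i\in B}p_i$ if and only if $A\subseteq B$, so the shifted map $A \mapsto m\prod_{i\in A}p_i$ is still a poset embedding of $\cube{k}{[0,\lfloor 2\sqrt k\rfloor]}$. Its smallest value is $m$ itself (at $A=\varnothing$) and its largest is $m$ times the product of the $\lfloor 2\sqrt k\rfloor$ largest of the first $k$ primes, which is at most $m\,p_k^{2\sqrt k}$. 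Hence it suffices to produce an $m$ with $n^\alpha < m$ and $m\,p_k^{2\sqrt k}\leq n$.

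To make both conditions compatible I would fix a constant $\beta < (1-\alpha)^2/16$ and set $k = \lfloor \beta (\log n/\log\log n)^2\rfloor$, exactly as before. The same estimate as in Theorem~\ref{thm:main}, namely $p_k^{2\sqrt k} = n^{4\sqrt\beta+o(1)}$, shows that $4\sqrt\beta < 1-\alpha$, so $n^{4\sqrt\beta+o(1)} \leq n^{1-\alpha-\delta}$ for some $\delta>0$ and all large $n$. The interval $(n^\alpha, n^{1-4\sqrt\beta-\delta/2}]$ is then nonempty and, by the prime number theorem, contains a prime $m=q$ once $n$ is large. Since $q > n^\alpha$ dwarfs $p_k$, which is only polylogarithmic in $n$, $q$ is automatically coprime to each $p_i$, and one checks $q\,p_k^{2\sqrt k}\leq n$. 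Thus $\cube{k}{[0,\lfloor 2\sqrt k\rfloor]}$ embeds into $\divp{(n^\alpha,n]}$, and Lemma~\ref{lem:dushnik} with Theorem~\ref{thm:dushnik} give $\pdim{\divp{(n^\alpha,n]}} \geq k-\sqrt k = (1-o(1))k$. Letting $\beta$ approach $(1-\alpha)^2/16$ yields the claimed lower bound.

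The only real obstacle is the coprimality-and-range bookkeeping: one must guarantee a multiplier $m$ that simultaneously lies above $n^\alpha$, keeps the top product below $n$, and avoids the primes $p_1,\dots,p_k$. The factor $1-\alpha$ lost in the available exponent, because $m$ already consumes an $n^\alpha$ of the budget, is precisely what degrades the constant from $\tfrac1{16}$ to $\tfrac{(1-\alpha)^2}{16}$, the square arising since the exponent constraint controls $\sqrt\beta$ rather than $\beta$.
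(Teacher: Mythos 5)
Your proof is correct and is essentially the paper's argument: the paper likewise obtains the lower bound by multiplying through by a shift of size about $n^\alpha$ (it uses the map $x \mapsto \lfloor n^\alpha\rfloor x$ to embed $\divp{[2,\lfloor n^{1-\alpha}\rfloor]}$ into $\divp{(n^\alpha,n]}$ and then invokes the lower bound of Theorem~\ref{thm:main} as a black box, whereas you inline that embedding), and gets the upper bound from the inclusion into $\divn$. Your coprimality bookkeeping is harmless but unnecessary, since $ma \divides mb$ if and only if $a \divides b$ for any fixed positive integer $m$.
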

\begin{proof}
These bounds follow from the fact that
$\divp{[2,\lfloor n^{1-\alpha}\rfloor]}
\embeds \divp{(n^\alpha,n]} \embeds \divn$.
The first map
$\divp{[2,\lfloor n^{1-\alpha} \rfloor]}
\embeds \divp{(n^\alpha, n]}$,
defined as $x \mapsto \lfloor n^\alpha \rfloor x$,
is an embedding if $\alpha n > 2$
and the second is just the inclusion map.
\end{proof}

Also of interest is the arithmetic progression
$a[n] + b = \{a k + b : k \in [n] \}$.
If $a$ and $b$ are coprime,
then denote by $p_{a,b,m}$ the $m$-th prime congruent to $b \pmod{a}$.
The prime number theorem for arithmetic progressions implies that
$p_{a,b,n} \sim \varphi(a) n \log n$
as $n \to \infty$,
where $\varphi(a)$ is the Euler totient function,
defined as the order of the multiplicative group $(\Z/a\Z)^{\times}$.

\begin{corollary}
\label{cor:arith}
For any fixed $a$ and $b$,
as $n\to\infty$,
\begin{equation*}
  \big( \tfrac{1}{16} - o_{a,b}(1) \big)
    \frac{(\log n)^2}{(\log \log n)^2}
  \leq \pdim{\divp{a[n] + b}}
  \leq \big( 4 + o_{a,b}(1) \big)
    \frac{(\log n)^2}{\log \log n}.
\end{equation*}
\end{corollary}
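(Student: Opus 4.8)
The plan is to mirror the proof of Theorem~\ref{thm:main}, handling the multiplicative structure of the residue class by a careful choice of primes. The upper bound needs no new idea: since $a[n]+b \subseteq [an+b]$, the inclusion is a suborder embedding $\divp{a[n]+b} \embeds \divp{[an+b]}$, so monotonicity together with Theorem~\ref{thm:main} gives $\pdim{\divp{a[n]+b}} \leq (4+o(1))\,\frac{(\log(an+b))^2}{\log\log(an+b)}$. As $a$ and $b$ are fixed, $\log(an+b) = (1+o(1))\log n$ and $\log\log(an+b) = (1+o(1))\log\log n$, which yields the claimed upper bound with an $o_{a,b}(1)$ error term.

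For the lower bound I would first reduce to the coprime case. Writing $d = \gcd(a,b)$, $a' = a/d$ and $b' = b/d$, the map $x \mapsto x/d$ is a divisibility-preserving bijection from $a[n]+b$ onto $a'[n]+b'$, since $x \divides y$ if and only if $dx \divides dy$; hence $\divp{a[n]+b}$ and $\divp{a'[n]+b'}$ are isomorphic posets, with $\gcd(a',b')=1$. So we may assume $\gcd(a,b)=1$ from the outset. With $a$ and $b$ coprime, I would imitate the lower bound of Theorem~\ref{thm:main} but restrict to primes lying in a single residue class. Fix, by Dirichlet's theorem, a prime $q_0 \equiv b \pmod a$, and let $p'_1 < p'_2 < \dotsb$ enumerate the primes congruent to $1 \pmod a$, so that $p'_m = p_{a,1,m} \sim \varphi(a)\, m \log m$. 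Any product of distinct $p'_i$ is $\equiv 1 \pmod a$, so multiplying by $q_0$ lands in the residue class $b \pmod a$.

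The map $I \mapsto q_0 \prod_{i \in I} p'_i$, defined on subsets $I \subseteq [k]$ with $|I| \leq 2\sqrt{k}$, is then a poset embedding $\cube{k}{[0,\lfloor 2\sqrt{k}\rfloor]} \embeds \divp{a[n]+b}$: distinctness of the primes makes it order-preserving and order-reflecting, and every image is $\equiv b \pmod a$, hence of the form $ak'+b$. (A bounded, $(a,b)$-dependent number of the smallest primes must be absorbed into the multiplier to guarantee the images also exceed $a+b$, i.e.\ correspond to $k' \geq 1$; this does not affect the leading constant.) It remains to bound how large $k$ may be taken. The largest image is at most $q_0 (p'_k)^{2\sqrt{k}}$, and since $\log q_0 = O_{a,b}(1)$ and $\log p'_k = (1+o(1))\log k$ — the factor $\varphi(a)$ being absorbed — we get $q_0 (p'_k)^{2\sqrt{k}} = k^{(2+o(1))\sqrt{k}}$, exactly the quantity controlling the main theorem.

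Taking $k = \lfloor \alpha (\log n/\log\log n)^2 \rfloor$ with $\alpha < 1/16$, the identical computation shows this is at most $an+b$ for large $n$, so by Theorem~\ref{thm:dushnik} and Lemma~\ref{lem:dushnik} we obtain $\pdim{\divp{a[n]+b}} \geq k - \sqrt{k} = (1-o(1))k$; letting $\alpha \to 1/16$ from below gives the lower bound. The one point genuinely requiring verification — and the only place the arithmetic progression enters nontrivially — is that passing to the sparser set of primes $\equiv 1 \pmod a$ costs only a constant factor $\varphi(a)$ in $p'_k$, which is swallowed by the $k^{o(1)}$ term. This is precisely what preserves the full constant $1/16$, in contrast with the reduced constant $(1-\alpha)^2/16$ of Corollary~\ref{cor:ntothealpha}, where the product was instead squeezed into a short range rather than allowed to grow up to $an+b$.
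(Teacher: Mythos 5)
Your proof is correct, and the overall skeleton (upper bound via the inclusion $\divp{a[n]+b}\embeds\divp{[an+b]}$; lower bound via an embedded truncated hypercube built from primes in arithmetic progressions, controlled by the prime number theorem for APs and then Dushnik's theorem) matches the paper's. The one genuine difference is how you handle the congruence constraint in the lower bound. The paper reduces to $\gcd(a,b)=1$ as you do, but then uses the first $k$ primes $\equiv b \pmod a$ and restricts to the two-layer poset $\cube{k}{1,\ell}$ with $\ell \equiv 1 \pmod{\varphi(a)}$, so that every image is $\equiv b^{|A|}\equiv b \pmod a$ by Euler's theorem. You instead fix an anchor prime $q_0\equiv b\pmod a$ and multiply it by products of primes $\equiv 1\pmod a$, which lets you embed the full truncated cube $\cube{k}{[0,\lfloor 2\sqrt{k}\rfloor]}$ with no constraint on layer sizes; by Lemma~\ref{lem:dushnik} both posets have dimension $N(k,\lfloor 2\sqrt{k}\rfloor+1)$, so the two devices yield the same bound. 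Your version is arguably cleaner in that it avoids the order-of-$b$ bookkeeping, at the cost of the (correctly flagged, and easily dispatched) boundary issues: one should note that the map remains injective and order-reflecting even if $q_0$ happens to lie among the $p_i'$ (unique factorization handles this, since then $q_0$ appears with multiplicity two in the relevant images), and that choosing $q_0>a+b$ already guarantees every image is of the form $ak'+b$ with $k'\geq 1$. Your closing observation --- that the $\varphi(a)$ loss in $p_k'\sim\varphi(a)k\log k$ is absorbed into $k^{o(1)}$ and hence the constant $\tfrac1{16}$ survives, unlike in Corollary~\ref{cor:ntothealpha} --- is exactly the right point to emphasize and agrees with the paper.
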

\begin{proof}
Since the divisibility poset is dilation-invariant,
we may assume $a$ and $b$ are coprime.
Since $\divp{a[n]+b} \embeds \divp{[an + b]}$,
the upper bound from Theorem~\ref{thm:main} holds.
For the lower bound,
note that $b^{\ell} \equiv b \pmod{a}$
whenever $\ell \equiv 1 \pmod{\varphi(a)}$,
since the order of an element of $(\Z/a\Z)^{\times}$ divides $\varphi(a)$.
Now let
$\ell = \varphi(a)\big\lceil \frac{2\sqrt{k}-1}{\varphi(a)} \big\rceil + 1$,
that is, $2\sqrt{k}$ rounded up to the nearest integer
congruent to $1$ modulo $\varphi(a)$.

We consider now the following map
$f \from \cube{k}{1, \ell} \embeds \divp{a[n] + b}$
defined as follows.
Let $p_{a,b,1}, \dotsc, p_{a,b,k}$ be the first $k$ primes
congruent to $a \pmod{b}$,
and map a set $A \subset \cube{k}{1, \ell}$
to $f(A) = \prod_{i \in A} p_{a,b,i}$.
Note that as $|A| \in \{1, \ell\}$,
we have $f(A) \equiv b^{|A|} \equiv b \pmod{a}$.
It is clear to see that this is indeed an embedding,
given that $k$ is not so large.

Indeed, it is sufficient that $p_{a,b,k}^{\ell} \leq ab + n$.
By the prime number theorem for arithmetic progressions,
we have $p_{a,b,k} \sim \varphi(a) k \log k = k^{1 + o_{a,b}(1)}$,
so $p_{a,b,k}^{\ell} = k^{(2 + o_{a,b}(1))\sqrt{k}}$.
Therefore, a lower bound of the same form
as in Theorem~\ref{thm:main} holds asymptotically
for $\pdim{\divp{a[n]+b}}$.
\end{proof}


\section{The 2-dimension of divisibility on \texorpdfstring{$[n]$}{[n]}}
\label{sec:2dimension_divn}

The goal of this section is to prove Theorem~\ref{thm:main2}.
We first give a formal definition of $t$-dimension.

For any $t\geq 2$,
the \emph{$t$-dimension} of a poset $P$,
denoted $\dim_t\big(P\big)$
is equal to the minimum $d$
such that $P$ can be embedded into a product of $d$ linear orders,
each of cardinality at most $t$.
In particular,
$\twodim{P}$ is the dimension of the smallest hypercube
into which $P$ can be embedded.
As with the Dushnik-Miller dimension,
$t$-dimension is subadditive and monotone for all $t$.

For any poset $P$ with $n$ elements $\{p_1,p_2,\dots,p_n\}$,
the map $p \mapsto \{ i \in [n] : p_i \leq p\}$
is a poset embedding $P \embeds \cube{n}{}$,
so $\twodim{P}\le n$.
Since $\dim_t\big(P\big)$ is monotone decreasing in $t$,
this implies that $\dim_t \big(P \big)$ is well-defined
for every $t \geq 2$ and every finite poset $P$.
We also have the trivial lower bound
$\dim_t \big( P \big) \geq \log_t(|P|)$.
Another useful observation is that a chain of size $\ell$
has $2$-dimension $\ell-1$.

For any $2 \leq k \leq n$,
$N_2(n,k)$ is defined as the minimum cardinality of a set $S$
of subsets of $[n]$ such that,
for any pointed $k$-subset $(A,a)$ of $[n]$,
there exists a set $B \in S$
such that $A \cap B = \{a\}$.
By analogy with $N(n,k)$,
we call such a set a \emph{$k$-suitable family} of subsets.
The following partial analogue to Lemma~\ref{lem:dushnik}
is essentially due to Kierstead~\cite{kierstead}.

\begin{lemma}
\label{thm:2dim_dushnik}
For every $n$ and every $k \leq n + 1$,
\begin{equation*}
  \twodim{\cube{n}{1,k}} = \twodim{\cube{n}{[0,k]}} = N_2(n,k+1)
\end{equation*}
\end{lemma}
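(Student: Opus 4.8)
The plan is to mirror the proof of Lemma~\ref{lem:dushnik}, establishing the cycle of inequalities
\[
  N_2(n,k+1) \le \twodim{\cube{n}{1,k}} \le \twodim{\cube{n}{[0,k]}} \le N_2(n,k+1).
\]
The middle inequality is immediate: since $\cube{n}{1,k}$ is a suborder of $\cube{n}{[0,k]}$ and $2$-dimension is monotone, we get $\twodim{\cube{n}{1,k}} \le \twodim{\cube{n}{[0,k]}}$. So the real work is to produce a $(k+1)$-suitable family from an embedding of $\cube{n}{1,k}$, and conversely an embedding of $\cube{n}{[0,k]}$ from a $(k+1)$-suitable family, each time preserving the cardinality. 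Throughout I would identify $[n]$ with the singletons of $\cube{n}{}$, and assume $n \ge k+1$ so that pointed $(k+1)$-subsets exist.

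For the first inequality, fix an embedding $\varphi \from \cube{n}{1,k} \embeds \cube{d}{}$ with $d = \twodim{\cube{n}{1,k}}$. For each coordinate $j \in [d]$ I would set $B_j = \{a \in [n] : j \in \varphi(\{a\})\}$, the set of singletons whose image uses coordinate $j$, and claim that $\{B_1, \dots, B_d\}$ is $(k+1)$-suitable. Given a pointed $(k+1)$-subset $(\{a_1, \dots, a_{k+1}\}, a_{k+1})$, write $K = \{a_1, \dots, a_k\}$. Since $a_{k+1} \notin K$ we have $\{a_{k+1}\} \not\subseteq K$, so $\varphi(\{a_{k+1}\}) \not\subseteq \varphi(K)$; pick $j$ in the difference. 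Monotonicity of $\varphi$ gives $\varphi(\{a_i\}) \subseteq \varphi(K)$ for each $i \le k$, so $j \notin \varphi(\{a_i\})$, i.e.\ $a_i \notin B_j$, while $a_{k+1} \in B_j$. Thus $\{a_1,\dots,a_{k+1}\} \cap B_j = \{a_{k+1}\}$, as required, and $N_2(n,k+1) \le d$.

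For the last inequality, I would start from a $(k+1)$-suitable family $\{B_1, \dots, B_d\}$ with $d = N_2(n,k+1)$ and define $\psi \from \cube{n}{[0,k]} \to \cube{d}{}$ by $\psi(X) = \{j \in [d] : X \cap B_j \ne \emptyset\}$. Monotonicity is clear, since a larger $X$ can only meet more of the $B_j$. For the reflecting property I would argue by contraposition: if $X \not\subseteq Y$ with $|X|, |Y| \le k$, pick $x \in X \setminus Y$, extend $Y$ to a set $Y'$ of size exactly $k$ inside $[n] \setminus \{x\}$ (possible as $n \ge k+1$), and apply suitability to the pointed $(k+1)$-subset $(Y' \cup \{x\}, x)$ to obtain $B_j$ with $x \in B_j$ and $Y' \cap B_j = \emptyset$. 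Then $j \in \psi(X) \setminus \psi(Y)$, so $\psi(X) \not\subseteq \psi(Y)$; hence $\psi$ is an embedding and $\twodim{\cube{n}{[0,k]}} \le d$.

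The main obstacle is the reflecting property in this last step: the family is only guaranteed to separate pointed sets of size exactly $k+1$, whereas $\cube{n}{[0,k]}$ contains sets of every size up to $k$, so I must first promote $(k+1)$-suitability into the ability to separate all incomparable pairs. The extension trick above does exactly this, and is where the hypothesis $n \ge k+1$ enters; it plays the same role as the observation in Lemma~\ref{lem:dushnik} that a $(k+1)$-suitable set is $\ell$-suitable for every $\ell \le k+1$. The degenerate ranges of $k$ close to $n$ would then be checked separately.
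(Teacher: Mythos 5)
Your proposal is correct and follows essentially the same route as the paper: the same cycle of three inequalities, the same construction of a suitable family from the coordinates of an embedding of $\cube{n}{1,k}$, and the same map $X \mapsto \{j : X \cap B_j \neq \varnothing\}$ in the reverse direction. Your explicit extension of $Y$ to a $k$-set before invoking suitability is a point the paper leaves implicit, but it is the same underlying observation that $(k+1)$-suitability implies $\ell$-suitability for $\ell \leq k+1$.
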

\begin{proof}
To show this, we prove the following sequence of inequalities:
\begin{equation*}
  N_2(n, k+1)
    \leq \twodim{\cube{n}{1,k}}
    \leq \twodim{\cube{n}{[0,k]}}
    \leq N_2(n, k+1).
\end{equation*}
To show that $N_2(n,k+1) \leq \twodim{\cube{n}{1,k}}$,
let $d = \twodim{\cube{n}{1,k}}$
and $f \from \cube{n}{1,k} \embeds \cube{d}{}$ be an embedding.
For each $i \in [d]$,
let $X_i$ be the set of all $j\in[n]$
such that $i\in f\big( \{j\} \big)$.
We claim that $\{ X_i : i \in [d] \}$ is $(k + 1)$-suitable.
Indeed, let $(A,a)$ be a pointed $(k+1)$-subset of $[n]$.
Since $f\big( \{a\} \big) \not\subseteq f\big( A\setminus\{a\} \big)$,
there is an $i \in [d]$ such that $i\in f\big( \{a\} \big)$
but $i \not\in f\big( A\setminus\{a\} \big)$.
It follows that $i \not\in f\big( \{b\} \big)$
for any $b\in A\setminus\{a\}$,
so $X_i \cap A = \{a\}$.

The second inequality follows by monotonicity
from the fact that $\cube{n}{1,k} \embeds \cube{n}{[0,k]}$.

To show that $\twodim{\cube{n}{[0,k]}} \leq N_2(n, k+1)$,
let $\{X_1, X_2, \dots, X_d\}$
be a $(k + 1)$-suitable family of subsets of $[n]$.
Define a map $f \from \cube{n}{[0,k]} \to \cube{d}{}$,
$f(A) = \{i \in [d] : A \cap X_i \neq \varnothing \}$.
We claim that this map is an embedding.
Indeed, if $A \subseteq B \subseteq [n]$,
then $f(A) \subseteq f(B)$.
Now, let $A \not\subseteq B$,
where $a \in A$, but $a \notin B$.
Since the family $\{X_i\}$ is $(k+1)$-suitable,
there is $i$ with $(B \cup \{a\}) \cap X_i = \{a\}$,
therefore $X_i \cap B = \varnothing$,
so $i \notin f(B)$,
whereas $i \in f(A)$.
In other words,
$A \not\subseteq B$ implies $f(A) \not\subseteq f(B)$.
Thus $f$ is an embedding
and $\twodim{\cube{n}{[0,k]}} \leq d$.
\end{proof}

An analogue of Lemma~\ref{lem:randomsuitable}
can be proved via the first moment method
by taking random subsets of $[n]$
with each element having probability $\frac{1}{k}$ of being chosen.
This leads to the following theorem,
also by Kierstead~\cite{kierstead}.

\begin{theorem}
\label{thm:kierstead}
For all $2 \leq k \leq n$,
$N_2(n,k) \leq \lceil ek^2\log n \rceil$.
\end{theorem}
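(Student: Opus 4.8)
The plan is to mirror the probabilistic argument used for Lemma~\ref{lem:randomsuitable}, replacing random permutations by random subsets, exactly as the remark preceding the statement suggests. Fix a parameter $s$ to be determined and choose subsets $B_1, \dotsc, B_s \subseteq [n]$ independently, where each $B_j$ contains each element of $[n]$ independently with probability $1/k$. I will show that for $s = \lceil ek^2 \log n \rceil$, with positive probability the family $\{B_1, \dotsc, B_s\}$ is $k$-suitable, which by the definition of $N_2(n,k)$ immediately yields the bound.

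The first step is to estimate the single-trial success probability. For a fixed pointed $k$-subset $(A,a)$, a single random $B_j$ satisfies $A \cap B_j = \{a\}$ precisely when $a \in B_j$ and the remaining $k-1$ elements of $A$ lie outside $B_j$; since the inclusions are independent, this happens with probability $\frac{1}{k}\big(1 - \frac{1}{k}\big)^{k-1}$. The elementary inequality $\big(1 - \frac{1}{k}\big)^{k-1} \geq \frac{1}{e}$ then bounds this below by $\frac{1}{ek}$, so the probability that \emph{none} of the $s$ subsets covers $(A,a)$ is at most $\big(1 - \frac{1}{ek}\big)^s < e^{-s/(ek)}$.

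Next I would apply the first moment method. Summing over all pointed $k$-subsets, of which there are $k\binom{n}{k} \leq n^k$, the expected number left uncovered is at most $n^k e^{-s/(ek)}$. Choosing $s = \lceil ek^2 \log n \rceil \geq ek^2 \log n$ makes the exponent satisfy $s/(ek) \geq k \log n$, so this expectation is strictly less than $n^k \cdot n^{-k} = 1$. Hence some outcome leaves every pointed $k$-subset covered, producing a $k$-suitable family of size at most $\lceil ek^2 \log n \rceil$, as required.

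The argument is routine, so the only genuinely delicate point is the choice of the inclusion probability $p = 1/k$ and the resulting factor of $e$. Unlike the permutation version, where a uniformly random permutation covers a given pointed $k$-subset with probability exactly $1/k$, here the single-trial probability $p(1-p)^{k-1}$ is maximised at $p = 1/k$ and equals only $\big(1+o(1)\big)/(ek)$, which is smaller by a factor of about $e$; this loss is precisely what pushes the bound from $k^2 \log n$ up to $ek^2 \log n$. Verifying that $p = 1/k$ is optimal, by differentiating $p(1-p)^{k-1}$, and recalling $\big(1-\frac{1}{k}\big)^{k-1} \geq \frac{1}{e}$ are the two small computations on which the whole estimate rests.
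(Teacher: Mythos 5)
Your proof is correct and follows essentially the same probabilistic argument as the paper: random subsets with inclusion probability $1/k$, the bound $\tfrac{1}{k}(1-\tfrac{1}{k})^{k-1} \geq \tfrac{1}{ek}$, and a first moment count over the at most $n^k$ pointed $k$-subsets. In fact your exponent $e^{-s/(ek)}$ is the correct one; the paper's displayed $e^{-es/k}$ is a typo for the same quantity.
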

\begin{proof}
The proof is probabilistic.
Fix a natural number $s$
and choose a family of $s$ sets of $[n]$ independently as follows.
Each set is select by choosing each element of $[n]$ independently
with probability $1/k$.
The probability that a given pointed $k$-subset
isn't covered by any of the sets in the family is
$\big(1 - (1/k)(1-1/k)^{k-1}\big)^s < e^{-es/k}$.
Since the total number of pointed $k$-subsets of $[n]$
is $k \binom{n}{k} \leq n^k$,
the expected number of pointed $k$-subsets not covered
is less than $n^k e^{-es/k} \leq 1$
when $s \geq e k^2 \log n$,
so there is an instance where every pointed $k$-subset is covered.
Hence, $N(n,k) \leq \lceil e k^2 \log n \rceil$.
\end{proof}

Because the $2$-dimension of a poset depends in part on its cardinality,
the full analogue of Lemma~\ref{lem:dushnik} is false.
We need the following lemma to extend the result
we just obtained to multisets
in order to give the upper bound in Theorem~\ref{thm:main2}.

\begin{lemma}
\label{lem:multi}
For all $k \leq n-1$,
$\twodim{\multi{n}{[0,k]}} < e(\frac{\pi^2}{6}k^2+2k\log k+3k)\log n + k$.
\end{lemma}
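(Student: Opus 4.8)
The plan is to mimic the set-case construction in Lemma~\ref{thm:2dim_dushnik}, but to track multiplicities through the \emph{level sets} of a multiset. For a multiset $A$ of $[n]$ with multiplicity function $m_A$, write $A^{(\ell)} = \{ x \in [n] : m_A(x) \geq \ell \}$ for its $\ell$-th level set. The two facts I would build on are: first, $A \subseteq B$ as multisets if and only if $A^{(\ell)} \subseteq B^{(\ell)}$ for every $\ell \geq 1$; and second, if $A \in \multi{n}{[0,k]}$ then $|A^{(\ell)}| \leq \lfloor k/\ell \rfloor$, since each element of $A^{(\ell)}$ contributes at least $\ell$ to the total cardinality $\leq k$. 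In particular only levels $\ell \in [k]$ are relevant.

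For each $\ell \in [k]$ I would fix a $\big( \lfloor k/\ell \rfloor + 1 \big)$-suitable family $\mathcal{F}_\ell$ of subsets of $[n]$ of minimum size, so that $|\mathcal{F}_\ell| = N_2\big(n, \lfloor k/\ell\rfloor + 1\big)$; this is legitimate because $2 \leq \lfloor k/\ell\rfloor + 1 \leq k+1 \leq n$, using the hypothesis $k \leq n-1$. Indexing coordinates by pairs $(\ell, X)$ with $\ell \in [k]$ and $X \in \mathcal{F}_\ell$, I would define the candidate embedding by $f(A) = \{ (\ell, X) : X \cap A^{(\ell)} \neq \varnothing \}$, into the hypercube of dimension $d = \sum_{\ell=1}^{k} N_2\big(n, \lfloor k/\ell\rfloor + 1\big)$.

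Verifying that $f$ is an embedding is the routine part. Monotonicity is immediate from the level-set characterisation of $\subseteq$. For the converse, given $A \not\subseteq B$ I would choose $x$ and $\ell$ with $m_A(x) \geq \ell > m_B(x)$, so that $x \in A^{(\ell)}$ while $x \notin B^{(\ell)}$. Applying the suitability of $\mathcal{F}_\ell$ to the pointed set $\big(B^{(\ell)} \cup \{x\}, x\big)$, whose size is at most $\lfloor k/\ell\rfloor + 1$, produces some $X \in \mathcal{F}_\ell$ with $X \cap B^{(\ell)} = \varnothing$ and $x \in X$; then $(\ell, X) \in f(A) \setminus f(B)$, as required.

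It remains to bound $d$. I would invoke Theorem~\ref{thm:kierstead} in the form $N_2(n,m) < e m^2 \log n + 1$, substitute $m = \lfloor k/\ell\rfloor + 1 \leq k/\ell + 1$, and expand $(k/\ell + 1)^2 = k^2/\ell^2 + 2k/\ell + 1$. Summing over $\ell$ gives $e \log n$ times $k^2 \sum_{\ell=1}^k \ell^{-2} + 2k \sum_{\ell=1}^k \ell^{-1} + k$, plus a trailing $+k$ from the $+1$ at each level. Bounding $\sum_{\ell=1}^k \ell^{-2} < \pi^2/6$ and $\sum_{\ell=1}^k \ell^{-1} \leq 1 + \log k$ then yields exactly
\begin{equation*}
  d < e\Big( \tfrac{\pi^2}{6} k^2 + 2k\log k + 3k \Big)\log n + k.
\end{equation*}
The one conceptual point worth isolating — and the reason the construction succeeds — is that the level sets shrink like $k/\ell$, so the governing sum $\sum_\ell (k/\ell)^2$ converges to the Basel-type constant $\tfrac{\pi^2}{6}k^2$; a uniform treatment that allowed every level to have size up to $k$ would instead produce a wasteful $k^3$ bound.
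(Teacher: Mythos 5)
Your proof is correct and follows essentially the same route as the paper: decompose a multiset into its level sets $A^{(\ell)}$, observe $|A^{(\ell)}| \leq \lfloor k/\ell \rfloor$, handle each level with a $(\lfloor k/\ell\rfloor+1)$-suitable family via Theorem~\ref{thm:kierstead}, and sum the Basel-type series. The only cosmetic difference is that you unroll the suitable-family construction explicitly, where the paper instead factors the argument through the embedding $\multi{n}{[0,k]} \embeds \prod_{i=1}^k \cube{n}{[0,\lfloor k/i\rfloor]}$ and cites Lemma~\ref{thm:2dim_dushnik}.
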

\begin{proof}
For each $A \in \multi{n}{[0,k]}$
and each $i \in [k]$,
let $A^i$ be the set of all elements of $A$ of multiplicity at least $i$.
Observe that $i|A^i| \leq |A| \leq k$,
so $|A^i| \leq k/i$.
For any two multisets $A$ and $B$ in $\multi{n}{[0,k]}$,
$A\subseteq B$ if and only if
$A^i\subseteq B^i$ for every $i \in [k]$.
Hence the map $A \mapsto (A^1,\dotsc,A^k)$
is a poset embedding
$\multi{n}{[0,k]} \embeds \prod_{i=1}^k \cube{n}{[0,\lfloor k/i\rfloor]}$.
Therefore, by Lemma~\ref{thm:2dim_dushnik}
and Theorem~\ref{thm:kierstead},
we have
\begin{align*}
  \twodim{\multi{n}{[0,k]}}
    &\leq \sum_{i=1}^k \twodim{\cube{n}{[0,\lfloor k/i\rfloor]}}
      \leq \sum_{i=1}^k N_2(n,\lfloor k/i\rfloor+1) \\
    &< e \log n \sum_{i=1}^k \Big( \frac{k^2}{i^2}+\frac{2k}{i}+1 \Big) + k \\
    &\leq e \big(\tfrac{\pi^2}{6}k^2 +2k(\log k+1) + k \big)\log n + k.
    \qedhere
\end{align*}
\end{proof}

Since $\pdim{\divn} \leq \twodim{\divn}$,
Theorem~\ref{thm:main} already provides a lower bound for $\twodim{\divn}$.
Therefore, to prove Theorem~\ref{thm:main2},
only the proof of the upper bound is required.

\begin{proof}[Proof of Theorem~\ref{thm:main2}]
The proof is essentially the same as that
of the upper bound of Theorem~\ref{thm:main},
so we will omit some of the details.
Take $\varepsilon = \varepsilon(n) > 0$,
to be chosen later.
Let $S$ be the set of all elements of $[n]$
whose prime factors are all at most $\varepsilon\log n$
and $R$ be the set of all elements whose prime factors
are all greater than $\varepsilon \log n$.

The poset $\divp{S}$ can be embedded
into the product of $\pi(\varepsilon\log n)$ chains,
each of length at most $1 + \log_2 n$.
Since the $2$-dimension of a chain of length $\ell$ is $\ell - 1$,
we have
\begin{equation*}
  \twodim{\divp{S}}
    \leq \pi(\varepsilon \log n)\log_2 n
    = \frac{2\varepsilon}{\log 2}
      \cdot \frac{(\log n)^2}{\log\log n + \log \varepsilon}.
\end{equation*}

Let
$L = \big\lceil
  \log_2\big(\frac{\log n}{\log\log n + \log \varepsilon}\big)
\big\rceil$.
For each $i$ from $0$ to $L$,
let $\theta_i = n^{2^{-i}}$
(so that $\sqrt{\varepsilon\log n} < \theta_L \leq \varepsilon\log n$)
and $R_i$ be the set of elements of $[n]$
whose prime factors all lie in the interval
$\big(\theta_{i+1},\theta_i\big]$.
Just as before,
we have embeddings
$\divp{R} \embeds \prod_{i=0}^{L-1} \divp{R_i}$
and $\divp{R_i} \embeds \multi{\lfloor \theta_i \rfloor}{[0,2^{i+1}-1]}$.
By Lemma~\ref{lem:multi},
we have
\begin{align*}
  \twodim{\divp{R_i}}
  \leq \twodim{\multi{\lfloor \theta_i \rfloor}{[0,2^{i+1}-1]}}
    &\leq \Big(\frac{2e\pi^2}{3}\cdot 2^i + 4e\log 2 \cdot (i+1) + 6e\Big)\log n
      + 2^{i+1} \\
    &\leq \Big(\frac{2e\pi^2}{3}\cdot 2^i + 8i + 24\Big)\log n + 2^{i+1}.
\end{align*}

Therefore, we obtain the following bound:
\begin{equation*}
  \twodim{\divp{R}}
    \leq \sum_{i=0}^{L-1} \twodim{\divp{R_i}}
    \leq \frac{2e\pi^2}{3}\cdot 2^L\log n + 4 L^2 \log n + 24 L \log n
        + 2^{L+1}.
\end{equation*}

As before,
we need $\varepsilon$ to go to $0$ slowly;
it suffices to take $\varepsilon = 1/\log\log n$.
We then have
\begin{equation*}
  \twodim{\divp{R}}
    \leq \Big(\frac{4e\pi^2}{3} + o(1)\Big)\frac{(\log n)^2}{\log\log n}.
\end{equation*}

Note that the extra factor of 2 in the constant
comes from the ceiling in the definition of $L$.
Finally, we have
$\twodim{\divn}
  \leq \twodim{\divp{S}} + \twodim{\divp{R}}
  \leq \big(\frac{2\varepsilon}{\log 2} + \tfrac{4e\pi^2}{3}+o(1)\big)
    \frac{(\log n)^2}{\log\log n}$,
and, because $\varepsilon \to 0$,
the stated upper bound follows.
\end{proof}

We note that the analogues of Corollaries~\ref{cor:ntothealpha}
and \ref{cor:arith} hold for $2$-dimension as well.

\begin{corollary}
\label{cor:ntothealpha2dim}
\label{cor:arith2dim}
For any fixed $\alpha \in (0,1)$
and $a, b \in \N$,
as $n\to\infty$,
\begin{equation*}
  \big( \tfrac{(1 - \alpha)^2}{16} - o_\alpha(1) \big)
    \frac{(\log n)^2}{(\log\log n)^2}
  \leq \twodim{\divp{(n^\alpha, n]}}
  \leq \big( \tfrac{4}{3}e\pi^2 + o(1) \big)
    \frac{(\log n)^2}{\log \log n},
\end{equation*}
\begin{equation*}
\pushQED{\qed}
  \big( \tfrac{1}{16} - o_{a,b}(1) \big)
    \frac{(\log n)^2}{(\log \log n)^2}
  \leq \twodim{\divp{a[n] + b}}
  \leq \big( \tfrac{4}{3}e\pi^2 + o_{a,b}(1) \big)
  \frac{(\log n)^2}{\log \log n}. \qedhere
\popQED
\end{equation*}
\end{corollary}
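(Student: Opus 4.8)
The plan is to follow the proofs of Corollaries~\ref{cor:ntothealpha} and~\ref{cor:arith} line for line, substituting $2$-dimension for dimension throughout. The only extra fact needed is the elementary comparison $\pdim{P} \leq \twodim{P}$, which holds for every poset $P$ because an embedding into a product of $d$ two-element chains is in particular an embedding into $\R^d$, each two-element chain being a linear order. Since the lower bounds asserted here coincide exactly with those of Corollaries~\ref{cor:ntothealpha} and~\ref{cor:arith}, this comparison transfers them immediately: one has $\twodim{\divp{(n^\alpha,n]}} \geq \pdim{\divp{(n^\alpha,n]}}$ and $\twodim{\divp{a[n]+b}} \geq \pdim{\divp{a[n]+b}}$, and the two lower bounds follow at once. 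Alternatively, I would rerun the embeddings $\cube{k}{[0,\lfloor 2\sqrt{k}\rfloor]} \embeds \divp{(n^\alpha,n]}$ and $\cube{k}{1,\ell} \embeds \divp{a[n]+b}$ constructed there and bound, for instance, $\twodim{\cube{k}{1,\ell}} \geq \pdim{\cube{k}{1,\ell}} = N(k,\ell+1) \geq k - \sqrt{k}$ using Lemma~\ref{lem:dushnik} and Theorem~\ref{thm:dushnik}.

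For the upper bounds I would invoke monotonicity of $2$-dimension together with the same poset containments. In the first case the inclusion $\divp{(n^\alpha,n]} \embeds \divn$ yields $\twodim{\divp{(n^\alpha,n]}} \leq \twodim{\divn}$, and Theorem~\ref{thm:main2} supplies the bound $\big(\tfrac{4}{3}e\pi^2 + o(1)\big)\tfrac{(\log n)^2}{\log\log n}$. In the second case I would first use dilation-invariance of divisibility to reduce to coprime $a$ and $b$, then apply $\divp{a[n]+b} \embeds \divp{[an+b]}$ together with Theorem~\ref{thm:main2} applied to $an+b$; because $\log(an+b) = (1+o_{a,b}(1))\log n$, the constant $\tfrac{4}{3}e\pi^2$ is preserved and the correction is absorbed into the $o_{a,b}(1)$ term.

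I expect no genuine obstacle here: the entire content of the corollary is already packaged in Theorem~\ref{thm:main2} and in the embeddings built for Corollaries~\ref{cor:ntothealpha} and~\ref{cor:arith}, so the argument reduces to the one-line inequality $\pdim{P} \leq \twodim{P}$ for the lower bounds and to monotonicity plus the matching logarithmic estimates for the upper bounds. Verifying that the $(1-\alpha)^2$ factor and the $a,b$-dependent error terms emerge exactly as in the dimension case is identical bookkeeping to what was carried out there and introduces no new estimate.
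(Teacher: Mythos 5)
Your proposal is correct and is exactly the argument the paper has in mind: the paper omits the proof, stating only that it is ``nearly identical'' to the dimension case, and your route---transferring the lower bounds via $\pdim{P}\leq\twodim{P}$ from Corollaries~\ref{cor:ntothealpha} and~\ref{cor:arith}, and getting the upper bounds from monotonicity of $2$-dimension under the inclusions $\divp{(n^\alpha,n]}\embeds\divn$ and $\divp{a[n]+b}\embeds\divp{[an+b]}$ together with Theorem~\ref{thm:main2} and $\log(an+b)=(1+o_{a,b}(1))\log n$---is precisely that argument.
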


The proofs are nearly identical to the ones for dimension,
so we omit them.


\section{The dimension of the divisibility order on \texorpdfstring{$(\alpha n, n]$}{(an,n]}}
\label{sec:alpha}

In previous sections,
we have already considered the dimension of the divisibility order
on sets other than $[n]$,
such as $(n^{\alpha}, n]$ or $a[n] + b$.
The proof of Theorem~\ref{thm:main}
can be adapted to those cases after some small modifications.
In this section,
we will study the divisibility order on $(\alpha n, n]$,
whose dimension behaves in a different manner.
Indeed,
$\divp{(\alpha n, n]}$ is an antichain when $\alpha > 1/2$,
for instance,
so it has dimension only $2$.

The \emph{comparability graph} of a poset $P$
is the graph with vertex set $P$
where two elements are connected
if they are comparable in $P$.
A theorem by F\"{u}redi and Kahn~\cite{furedikahn}
states that a poset whose comparability graph
has maximum degree $\Delta$
has dimension less than $50\Delta(\log \Delta)^2$.
This bound was recently improved by Scott and Wood \cite{scottwood},
who showed that the maximum dimension
of a poset of maximum degree $\Delta$
is $\Delta (\log \Delta)^{1+o(1)}$
as $\Delta \to \infty$.

The comparability graph of $\divp{(\alpha n, n]}$
has maximum degree at most $1/\alpha + 1$.
Indeed, let $x \in (\alpha n, n]$
with $x = \beta n$
for some $\beta \in (\alpha, 1]$.
The number of elements from $(\alpha n, n]$ that divide $x$
is at most $\beta / \alpha$
and the number of those that are divisible by $x$
is at most $1 / \beta$,
so the degree of $x$ in the comparability graph
is at most $1/\beta + \beta/\alpha \leq 1 + 1/\alpha$.
Therefore, as $\alpha \to 0$, we have
\begin{equation*}
  \sup\limits_{n\in\N} \pdim{\divp{(\alpha n, n]}}
  \leq \tfrac{1}{\alpha} \big( \log(\tfrac{1}{\alpha}) \big)^{1+o(1)} .
\end{equation*}

We note that the $t$-dimension of $\divp{(\alpha n, n]}$
has a very distinct behaviour from the ordinary dimension,
since these posets have unbounded cardinality
and hence unbounded $t$-dimension.

For a poset $P$ and $x\in P$,
we define the outdegree of $x$ as
$\big| \{y \in P : y > x \}\big|$
and the indegree of $x$ as
$\big| \{y \in P : y < x \}\big|$.
Another theorem by F\"{u}redi and Kahn \cite{furedikahn}
says that a poset of cardinality $n$
and maximum outdegree $\upsilon$
has dimension at most
$\lceil 2(\upsilon+2)\log n \rceil$.
The following lemma gives similar bounds for $2$-dimension.

\begin{lemma}
\label{lem:furedikahn2dim}
Let $P$ be a poset of cardinality $n$,
maximum outdegree $\upsilon$,
and maximum indegree $\delta$.
Then we have the following bounds:
\begin{equation}
\label{eq:fk2d1}
    \twodim{P}
    \leq \lceil 2e (\upsilon + 2) \log n \rceil
\end{equation}
\begin{equation}
\label{eq:fk2d2}
    \twodim{P}
    \leq \big\lceil e(\upsilon+2)\big(\log n + \log(\upsilon+2)
      + \log(\delta+2) + 1\big) \big\rceil.
\end{equation}
\end{lemma}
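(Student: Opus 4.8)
The plan is to obtain both bounds from a single probabilistic construction of separating up-sets, in the spirit of the proofs of Lemma~\ref{lem:randomsuitable} and Theorem~\ref{thm:kierstead}. The first step is a reformulation of $2$-dimension. Call $C\subseteq P$ an \emph{up-set} if $x\in C$ and $x\leq_P y$ imply $y\in C$. Suppose we have up-sets $C_1,\dots,C_s$ with the property that for every pair $(a,b)$ with $a\not\leq_P b$ there is an $i$ with $a\in C_i$ and $b\notin C_i$. Then the map $p\mapsto\{i\in[s]:p\in C_i\}$ is an embedding $P\embeds\cube{s}{}$: if $a\leq_P b$ then $f(a)\subseteq f(b)$ since each $C_i$ is an up-set, while if $a\not\leq_P b$ the separating $C_i$ gives $i\in f(a)\setminus f(b)$, so $f(a)\subseteq f(b)\iff a\leq_P b$. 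Thus it suffices to produce few up-sets separating every such pair.

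For the construction, write $U_a=\{y\in P:y\geq_P a\}$, so that $|U_a|=1+\upsilon_a\leq\upsilon+1$, where $\upsilon_a$ is the outdegree of $a$. Fix $p=1/(\upsilon+2)$, let $T\subseteq P$ contain each element independently with probability $p$, and set $C=P\setminus\{y:y\leq_P t\text{ for some }t\in T\}$, which is an up-set. For a fixed pair with $a\not\leq_P b$ we have $b\notin U_a$, so the events $\{b\in T\}$ and $\{T\cap U_a=\varnothing\}$ are independent; their conjunction forces $b\notin C$ and $a\in C$, so $C$ separates $(a,b)$ with probability at least $p(1-p)^{|U_a|}\geq\tfrac{1}{\upsilon+2}\bigl(1-\tfrac{1}{\upsilon+2}\bigr)^{\upsilon+1}\geq\tfrac{1}{e(\upsilon+2)}$. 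Taking $C_1,\dots,C_s$ to be $s$ independent copies, a fixed pair is left unseparated with probability less than $e^{-s/(e(\upsilon+2))}$; since there are at most $n^2$ critical pairs, the expected number unseparated is less than $n^2e^{-s/(e(\upsilon+2))}$, which is at most $1$ once $s\geq 2e(\upsilon+2)\log n$. Some outcome then separates every pair, giving \eqref{eq:fk2d1}.

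For the sharper bound \eqref{eq:fk2d2} I would replace the union bound by the symmetric Lovász Local Lemma. For each pair $(a,b)$ with $a\not\leq_P b$ let $E_{a,b}$ be the event that no $C_j$ satisfies both $b\in T_j$ and $T_j\cap U_a=\varnothing$; avoiding all $E_{a,b}$ yields a separating family, and $\Prob[E_{a,b}]\leq\bigl(1-\tfrac{1}{e(\upsilon+2)}\bigr)^s<e^{-s/(e(\upsilon+2))}=:q$. The key point is that $E_{a,b}$ is determined only by the restrictions $T_j\cap(\{b\}\cup U_a)$, so $E_{a,b}$ and $E_{a',b'}$ are independent unless $(\{b\}\cup U_a)\cap(\{b'\}\cup U_{a'})\neq\varnothing$. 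Writing $D_v=\{y:y\leq_P v\}$ with $|D_v|\leq\delta+1$, the dependent pairs fall into two cases: either $b'\in\{b\}\cup U_a$ (at most $\upsilon+2$ choices of $b'$ and $n$ of $a'$), or $a'\in D_v$ for some $v\in\{b\}\cup U_a$ (at most $(\upsilon+2)(\delta+1)$ choices of $a'$ and $n$ of $b'$), for a total of at most $n(\upsilon+2)(\delta+2)$. The Local Lemma applies once $e\,q\cdot n(\upsilon+2)(\delta+2)\leq 1$, that is $s\geq e(\upsilon+2)\bigl(\log n+\log(\upsilon+2)+\log(\delta+2)+1\bigr)$, which is \eqref{eq:fk2d2}.

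The first-moment estimate is routine; the main obstacle is the dependency count for the Local Lemma. One must recognise that the relevant randomness of $E_{a,b}$ lives on the small set $\{b\}\cup U_a$ — which is precisely why the \emph{sufficient} event $\{b\in T_j,\ T_j\cap U_a=\varnothing\}$ is used rather than the full separation event $\{a\in C_j,\ b\notin C_j\}$, whose dependence would also involve $U_b$ — and then convert the outdegree bound on $|U_a|$ and the indegree bound on $|D_v|$ into the factor $(\upsilon+2)(\delta+2)$.
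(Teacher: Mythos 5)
Your proposal is correct and is essentially the paper's own argument in a dual formulation: your random up-set $C$ is exactly the coordinate slice $\{x : i \in f(x)\}$ of the paper's random map $f(x)=\bigcap_{y\geq x}A_y$ (with $T=\{y: i\notin A_y\}$), your sufficient event $\{b\in T,\ T\cap U_a=\varnothing\}$ matches the paper's use of $f(x)\subseteq A_y$ in place of $f(x)\subseteq f(y)$, and the union-bound and Local Lemma dependency count $(\upsilon+2)(\delta+2)n$ coincide. No substantive differences.
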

\begin{proof}
Let $P$ be a poset of cardinality $n$
and maximum outdegree $\upsilon$.
We are going to construct an embedding
from $P$ into $\cube{d}{}$ randomly,
for $d$ sufficiently large.
For each $x \in P$,
let $A_x$ be an independent random subset of $[d]$,
where each element is selected independently with probability
$p = 1 - \frac{1}{\upsilon + 2}$.
We define a map $f \from P \to \cube{d}{}$,
$f(x) = \bigcap_{y \geq x} A_y$.
Our goal is to show that,
if $d$ is large enough,
then with positive probability $f$ is a poset embedding.

Note that $f$ is monotone by construction.
It is an embedding if and only if,
for every pair $(x,y) \in P^2$
with $x \not\leq y$,
we have $f(x) \not\subseteq f(y)$.
For each such pair $(x,y)$,
let $E_{x,y}$ be the event that $f(x) \subseteq A_y$.
Since $f(y) \subseteq A_y$,
if none of the events $E_{x,y}$ occurs,
then $f$ is a poset embedding.
For each $i \in [d]$,
we have $\Prob ( i\in f(x) , i \not\in A_y ) \geq p^{\upsilon + 1}(1-p)$,
so
\begin{equation*}
  \Prob\left(E_{x,y}\right)
    \leq (1 - p^{\upsilon + 1}(1-p))^d
    \leq \exp \big( -p^{\upsilon + 1}(1-p)d \big)
    \leq \exp \big(-\tfrac{d}{e(\upsilon+2)} \big).
\end{equation*}

To prove (\ref{eq:fk2d1}),
choose $d \geq 2e(\upsilon + 2)\log n $.
The expected number of events $E_{x,y}$ that occur
is at most $(n^2-n)n^{-2}<1$,
so with positive probability none of them occurs.

To prove (\ref{eq:fk2d2}),
we use the following form of the Lovász local lemma:

\begin{lemma}[Lovász Local Lemma, Theorem 1.5 in \cite{lll}]
Suppose $0 < p < 1$
and let $A_1, A_2, \dots, A_k$ be events in a probability space
such that $\Prob(A_i) \leq p$.
Let $G$ be a graph with vertex set $[k]$ such that,
for all $i \neq j \in [k]$,
$A_i$ and $A_j$ are independent unless $ij \in E(G)$,
and suppose $G$ has maximum degree $\Delta$.
If $ep(\Delta+1)\leq 1$,
then $\Prob \big( \bigcap\limits_{i=1}^k \overline{A_i} \big) > 0$.
\hfill\qedsymbol
\end{lemma}

The event $E_{x,y}$ is independent from $E_{z,w}$
if the sets $\{y\} \cup \{u : u\geq x\}$
and $\{w\} \cup \{u : u\geq z\}$ are disjoint.
If they are not disjoint,
then either $w = y$,
or $z \leq y$,
or $w \geq x$,
or $x$ and $z$ have a common upper bound.
For fixed $x$ and $y$,
the number of choices for $(z,w)$ such that these sets intersect
(not counting $(x,y)$ itself)
is therefore at most
$n + (\delta+1)n + (\upsilon+1)n + (\upsilon+1)(\delta+1)n - 1
  = (\upsilon+2)(\delta+2)n-1$.

Hence the total number of events $E_{z,w}$ dependent on $E_{x,y}$
is at most $(\upsilon+2)(\delta+2) n-1$.
If we choose
$d \geq e(\upsilon+2)(\log n + \log(\upsilon+2) + \log(\delta+2) + 1)$,
then $e(\upsilon+2)(\delta+2) n e^{-\frac{d}{e(\upsilon+2)}} \leq 1$,
and by the Lovász Local Lemma,
the probability that none of the events $E_{x,y}$ occurs is positive.
\end{proof}


Using this result,
we can bound the $t$-dimension of $\divp{(\alpha n, n]}$
for any fixed $t$ and $\alpha$.
This poset has at least $(1-\alpha)n - 1$ elements,
so its $t$-dimension is at least
\begin{equation}
\label{eq:lb_tdim_alphan}
  \log_t\big(  (1 - \alpha) n - 1\big) = \frac{\log n}{\log t} - O_{\alpha,t}(1).
\end{equation}
We can apply Lemma~\ref{lem:furedikahn2dim} to obtain an upper bound.
As we have seen before,
the maximum indegree and outdregree of $\divp{(\alpha n,n]}$
is at most $1/\alpha$,
and its cardinality is at most $(1-\alpha)n$,
so by (\ref{eq:fk2d2})
\begin{equation*}
  \dim_t\big( \divp{(\alpha n,n]} \big)
    \leq \twodim{\divp{(\alpha n,n]}}
    \leq (e+o_\alpha(1)) \big( \tfrac{1}{\alpha} \big) \log n.
\end{equation*}
This,
together with the lower bound in (\ref{eq:lb_tdim_alphan}),
implies that
$\dim_t \big(\divp{(\alpha n,n]}\big) = \Theta_{\alpha,t}(\log n)$
as $n \to \infty$.


\section{Open questions}
\label{sec:open}

We pose several problems in this section,
of which the central one is the following.

\begin{question}
What is the correct asymptotic order of growth of $\pdim{\divn}$?
\end{question}

We do not make any prediction of whether the lower bound
or the upper bound in Theorem~\ref{thm:main}
is closer to the truth.
On one hand,
the lower bound is sharp in the sense that
no $\cube{k}{1,\ell}$ of higher dimension
can be embedded into $\divn$,
but on the other,
the upper bound is more technically refined,
where we bound each layer appropriately.
In any case,
we believe that determining the correct exponent
on the $\log\log n$ factor requires new ideas.
But we conjecture that for $\divn$,
dimension and $2$-dimension should behave similarly.

\begin{conjecture}
$\twodim{\divn} = \Theta\big( \pdim{\divn} \big)$
as $n\to\infty$.
\end{conjecture}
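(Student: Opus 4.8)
The plan is to treat the two directions separately. One inequality is immediate: a product of linear orders each of size at most $2$ is in particular a product of linear orders, so any embedding witnessing $\twodim{P}$ also witnesses $\pdim{P}$, giving $\pdim{P}\le\twodim{P}$ for every poset $P$; hence $\pdim{\divn} = O\big(\twodim{\divn}\big)$ with no work. All the difficulty lies in the reverse bound $\twodim{\divn} = O\big(\pdim{\divn}\big)$.

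For that direction the most appealing route would be a \emph{general} comparison lemma of the shape
\[
  \twodim{P} = O\big(\pdim{P} + \log|P|\big),
\]
valid for all finite posets $P$. Such a bound would settle the conjecture at once: since $\log|\divn| = \log n$ and Theorem~\ref{thm:main} gives $\pdim{\divn} \gg (\log n)^2/(\log\log n)^2 \gg \log n$, the additive $\log|P|$ term would be absorbed into $\pdim{\divn}$. The term $\log|P|$ is unavoidable in any such statement, as it is already forced by $\twodim{P}\ge\log_2|P|$: an antichain on $m$ points has dimension $2$ but $2$-dimension $\Theta(\log m)$, which at the level of suitable sets is precisely the gap $N(n,2)=2$ versus $N_2(n,2)=\Theta(\log n)$. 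The hope is that once this cardinality obstruction is accounted for, no further loss occurs. I would attempt to prove such a lemma by the random-subset method of Lemma~\ref{lem:furedikahn2dim}, but driving the construction with a realiser of $P$ in place of its degree: assign to each element a subset built monotonically along the $\pdim{P}$ linear extensions, and bound the number of critical pairs left uncovered.

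The main obstacle is that it is unclear whether such a general inequality holds, nor how to run the random construction off a realiser rather than off the degree. The naive coordinatewise conversion fails badly: a chain of length $\ell$ has dimension $1$ but $2$-dimension $\ell-1$, so one cannot encode each of the $\pdim{\divn}$ linear extensions independently. What one really needs is to exploit that the comparisons to be certified are few relative to $|P|^2$, which is where $\pdim{P}$ should enter; controlling this interaction is the crux.

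Failing a clean general lemma, a second, more hands-on plan is available, with the advantage that it would improve both bounds in tandem. The upper bounds in Theorems~\ref{thm:main} and~\ref{thm:main2} are proved through \emph{identical} decompositions, reducing $\divn$ to the layered pieces $\divp{R_i}\embeds\multi{\lfloor\theta_i\rfloor}{[0,2^{i+1}-1]}$, while both lower bounds come from the \emph{same} embedded cube $\cube{k}{[0,\lfloor 2\sqrt k\rfloor]}$. It therefore suffices to (a) show that $\twodim{\multi{m}{[0,k]}} = \Theta\big(\pdim{\multi{m}{[0,k]}}\big)$ — equivalently that $N_2(m,k)=\Theta(N(m,k))$ once $m$ is polynomially larger than $k$, so that the cardinality term is dominated — uniformly over the relevant ranges, and (b) show that the layered upper bound loses only a constant factor. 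Step (b), however, is exactly the content of the central question of Section~\ref{sec:open}: it amounts to determining the true exponent of the $\log\log n$ factor and hence closing the gap between the two bounds of Theorem~\ref{thm:main}. I therefore expect the conjecture to be, in effect, no easier than pinning down the order of $\pdim{\divn}$ itself, and I would pursue~(a) first as a self-contained, and likely more tractable, statement comparing suitable sets with suitable families.
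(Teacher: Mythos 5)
You should first note that this statement is posed in the paper as a \emph{conjecture} in the open-problems section; the paper does not prove it, and the only evidence it offers is that Theorems~\ref{thm:main} and~\ref{thm:main2} sandwich both $\pdim{\divn}$ and $\twodim{\divn}$ between constant multiples of $(\log n)^2/(\log\log n)^2$ and $(\log n)^2/\log\log n$, i.e.\ the two quantities are currently known to agree only up to a factor of $\log\log n$. So there is no proof in the paper to compare against, and your proposal --- as you yourself acknowledge --- is a research plan rather than a proof. The easy direction is fine: an embedding into $\cube{d}{}$ is in particular an embedding into a product of $d$ two-element chains, so $\pdim{P}\le\twodim{P}$ for every poset, which is exactly how the paper obtains the lower bound in Theorem~\ref{thm:main2}. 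Everything else remains open.

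The genuine gap is the entire hard direction, and both of your routes stop short of it. The proposed general lemma $\twodim{P}=O\big(\pdim{P}+\log|P|\big)$ is nowhere established: nothing in the paper, including the random-subset construction of Lemma~\ref{lem:furedikahn2dim}, drives such an argument off a realiser rather than off degrees, and whether that can be done at all is a real question, not a technicality. Your fallback plan correctly isolates two sub-problems, but, as you observe, step~(b) is essentially the paper's own central open question about the true power of $\log\log n$, so that route would prove the conjecture only by first resolving something at least as hard. Your diagnostic observations are accurate and worth keeping --- the chain of length $\ell$ showing that coordinatewise conversion of a realiser fails, the $\log_2|P|$ lower bound forcing the additive cardinality term, and the identification of $N(n,2)=2$ versus $N_2(n,2)=\Theta(\log n)$ as the minimal instance of the gap --- and step~(a), comparing $N(m,k)$ with $N_2(m,k)$ when $m$ is polynomially large in $k$, is a sensible self-contained target. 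But none of this constitutes a proof, and you are right to present it as a plan rather than an argument.
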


So far,
we have seen how the dimension behaves for some specific well structured sets,
like $[n]$ and $a[n] + b$.
How does the dimension of a typical set behave?

\begin{problem}
Let $p = p(n)$
and let $A \subseteq [n]$ be a random subset
where each element is chosen independently with probability $p$.
How does $\pdim{\divp{A}}$ grow with $n$?
\end{problem}

Although we believe this question to be of great interest,
we have made no serious attempt to answer it.
It would be interesting to see how other poset properties vary with $p$.

We have shown in Section~\ref{sec:alpha}
that the dimension of $\divp{(\alpha n, n]}$ is bounded for all $n$.
Indeed, we have shown an upper bound of
$\tfrac{1}{\alpha} \big(\log(\tfrac{1}{\alpha})\big)^{1+o(1)}$
as $\alpha \to 0$.
A lower bound of
$(\tfrac{1}{16} - o(1))
  \big(\log(\tfrac{1}{\alpha}) / \log\log(\tfrac{1}{\alpha})\big)^2$
for $\alpha$ sufficiently small
can be obtained by embedding $\divp{[2,\lfloor 1/\alpha \rfloor]}$
into $\divp{(\alpha n, n]}$
by multiplying every element by $\lfloor \alpha n \rfloor$,
noting again that in the lower bound of $\pdim{\divn}$
does not use the element 1.
It would be nice to improve the bounds obtained.
We also believe that
$\lim_{n \to \infty} \pdim{\divp{(\alpha n, n]}}$
exists for all $\alpha$.

\begin{problem}
How does $\sup_{n\in\N} \pdim{\divp{(\alpha n, n]}}$
increase as $\alpha \to 0$?
\end{problem}

Finally,
recall that we have shown that
$\dim_t(\divp{(\alpha n, n]}) = \Theta_{\alpha, t}\big( \log n\big)$.
This suggests the following conjecture.

\begin{conjecture}
For each $0 < \alpha < 1$
and $t \geq 2$,
there exists a constant $c = c(\alpha,t)$
such that $\dim_t \big( \divp{(\alpha n,n]} \big) \sim c\log n$
as $n \to \infty$.
\end{conjecture}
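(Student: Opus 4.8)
The plan is to establish that the limit $c(\alpha,t) = \lim_{n\to\infty} \dim_t\big(\divp{(\alpha n, n]}\big)/\log n$ exists, leaving the value itself aside. Write $f(n) = \dim_t\big(\divp{(\alpha n, n]}\big)$. The lower bound of \eqref{eq:lb_tdim_alphan}, $f(n) \geq \log_t\big((1-\alpha)n - 1\big) = \frac{\log n}{\log t} - O_{\alpha,t}(1)$, already gives $\liminf_{n\to\infty} f(n)/\log n \geq 1/\log t$, while Lemma~\ref{lem:furedikahn2dim} supplies a matching $O_\alpha(\log n)$ upper bound, so $f(n)/\log n$ stays in a bounded interval. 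The entire difficulty is to show that this ratio converges.

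First I would attempt a Fekete-type argument. Since $\dim_t$ is subadditive over products and monotone under embeddings, it would suffice to establish an approximate subadditivity $f(n_1 n_2) \leq f(n_1) + f(n_2) + o\big(\log(n_1 n_2)\big)$, after which Fekete's lemma yields convergence of $f(n)/\log n$ along powers of a fixed base; the self-similar embeddings $\divp{(\alpha m, m]} \embeds \divp{(\alpha n, n]}$ (multiplication by $n/m$ when $m \divides n$, which already gives $f(m) \leq f(n)$) would then let one interpolate to all $n$. The natural candidate for the subadditive step is the prime-splitting used in Theorem~\ref{thm:main}: separate the primes dividing an element of $(\alpha n_1 n_2, n_1 n_2]$ at a suitable threshold and use Lemma~\ref{lem:decomposition} to present the poset as a product of two cheaper pieces.

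The hard part, and the reason this does not follow from the earlier sections, is that the window parameter $\alpha$ is not preserved by these operations. Under the product map $(x,y)\mapsto xy$ the exponent degrades from $\alpha$ to $\alpha^2$, and, more seriously, a typical integer in $(\alpha n_1 n_2, n_1 n_2]$ does not factor into a part in $(\alpha n_1, n_1]$ and a part in $(\alpha n_2, n_2]$, so $\divp{(\alpha n_1 n_2, n_1 n_2]}$ is not literally a sub-poset of $\divp{(\alpha n_1, n_1]} \times \divp{(\alpha n_2, n_2]}$. Thus the naive subadditivity fails and Fekete's lemma does not apply directly.

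To circumvent this I would renormalise. The comparabilities of $\divp{(\alpha n, n]}$ are exactly the pairs $(x, dx)$ with $2 \leq d \leq \lfloor 1/\alpha\rfloor$ and $dx \leq n$, so the local structure near a point $x = \beta n$ is governed by the residue of $x$ modulo $M = \operatorname{lcm}(2, \dotsc, \lfloor 1/\alpha\rfloor)$ together with the scale $\beta \in (\alpha, 1]$; after rescaling $x \mapsto \beta$ the poset converges to a fixed quasi-periodic \emph{local model} on $(\alpha, 1]$. The plan is to show $f(n)/\log n$ converges by transferring a subadditive structure to this local model and invoking Fekete's lemma there. I expect the main obstacle to be precisely that $\dim_t$ is a global covering-type parameter admitting no exact product decomposition: one must prove that it concentrates and is asymptotically additive across independent copies of the local model, and converting the heuristic self-similarity into rigorous two-sided bounds on $f(n)$ is the crux. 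Even granting convergence, identifying $c(\alpha,t)$ explicitly would amount to evaluating an entropy-type functional of the limiting model, which I do not expect to have a closed form.
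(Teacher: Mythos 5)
This statement is a \emph{conjecture} in the paper, and the paper contains no proof of it: Section~\ref{sec:alpha} only establishes the two-sided bound $\dim_t\big(\divp{(\alpha n, n]}\big) = \Theta_{\alpha,t}(\log n)$, via the counting lower bound \eqref{eq:lb_tdim_alphan} and Lemma~\ref{lem:furedikahn2dim}, and the existence of the constant $c(\alpha,t)$ is precisely what is left open. Your proposal does not close that gap, and to your credit you essentially say so. The Fekete route needs an approximate subadditivity $f(n_1 n_2) \leq f(n_1) + f(n_2) + o\big(\log(n_1 n_2)\big)$, and you correctly identify that the only available mechanism for producing such an inequality --- Lemma~\ref{lem:decomposition} followed by a product embedding --- does not apply here, since an element of $(\alpha n_1 n_2, n_1 n_2]$ need not factor into a part in $(\alpha n_1, n_1]$ and a part in $(\alpha n_2, n_2]$, and the window parameter $\alpha$ is not preserved. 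Having ruled out the one concrete tool, you substitute a rescaled ``local model'' on $(\alpha,1]$, but this object is never defined as a poset whose $t$-dimension makes sense, no precise statement is formulated relating $f(n)/\log n$ to a quantity computed from it, and the decisive step --- that $\dim_t$ ``concentrates and is asymptotically additive across independent copies of the local model'' --- is the convergence assertion being sought, restated in different language rather than proved.

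Two further points. First, even if Fekete's lemma were applicable, it would give convergence of $f$ only along a multiplicatively structured subsequence (powers of a fixed base), and the interpolation to all $n$ via the dilation embeddings $\divp{(\alpha m, m]} \embeds \divp{(\alpha n, n]}$ for $m \divides n$ (which is a valid embedding and does give $f(m) \leq f(n)$) only yields monotonicity along divisor chains; a separate stability-in-$n$ argument would still be needed and is not supplied. Second, nothing in the proposal addresses whether the putative limit depends on $t$ beyond the trivial lower bound $1/\log t$, nor rules out oscillation of $f(n)/\log n$ between the two constants you exhibit. What you have written is a reasonable assessment of why the problem is hard and where a proof might start, but it is not a proof; the conjecture remains open.
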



\section{Acknowledgements}

The first author was supported by
NSF-DMS grants \#1855745,
``Applications of Probabilistic Combinatorial Methods''
and \#1600742,
``Probabilistic and Extremal Combinatorics''.
The second author was partially supported by
CAPES, Brazil.

We thank the referees for their
careful reading of the initial manuscript
and for their very helpful comments.


\begin{bibdiv}
\begin{biblist}

\bib{angelo}{article}{
      author={Angelo, R.},
       title={A {Cameron} and {Erd\H{o}s} conjecture on counting primitive
  sets},
        date={2018},
     journal={Integers},
      volume={18},
       pages={A25},
}

\bib{ce}{article}{
      author={Cameron, P.J.},
      author={Erd\H{o}s, P.},
       title={On the number of sets of integers with various properties},
        date={1990},
     journal={{Number Theory, Banff, Alberta, 1988}},
       pages={61\ndash 80},
}

\bib{dushnik}{article}{
      author={Dushnik, B.},
       title={Concerning a certain set of arrangements},
        date={1950},
     journal={Proceedings of the American Mathematical Society},
      volume={1},
       pages={788\ndash 796},
}

\bib{dm}{article}{
      author={Dushnik, B.},
      author={Miller, E.W.},
       title={Partially ordered sets},
        date={1941},
     journal={American Journal of Mathematics},
      volume={63},
       pages={600\ndash 610},
}

\bib{furedikahn}{article}{
      author={F{\"u}redi, Z.},
      author={Kahn, J.},
       title={On the dimensions of ordered sets of bounded degree},
        date={1986},
     journal={Order},
      volume={3},
      number={1},
       pages={15\ndash 20},
}

\bib{kierstead}{article}{
      author={Kierstead, H.},
       title={On the order dimension of 1-sets versus k-sets},
        date={1996},
     journal={Journal of Combinatorial Theory, Series A},
      volume={73},
      number={2},
       pages={219\ndash 228},
}

\bib{lpp}{article}{
      author={Liu, H.},
      author={Pach, P.P.},
      author={Palincza, R.},
       title={The number of maximum primitive sets of integers},
        date={2018},
     journal={arXiv:1805.06341},
}

\bib{novak}{article}{
      author={Nov{\'a}k, V{\'\i}t{\v{e}}zslav},
       title={On the well dimension of ordered sets},
        date={1969},
     journal={Czechoslovak Mathematical Journal},
      volume={19},
      number={1},
       pages={1\ndash 16},
}

\bib{scottwood}{article}{
      author={Scott, A.},
      author={Wood, D.},
       title={Better bounds for poset dimension and boxicity},
        date={2019},
     journal={Transactions of the American Mathematical Society},
}

\bib{spencer}{article}{
      author={Spencer, J.},
       title={Minimal scrambling sets of simple orders},
        date={1971},
     journal={Acta Mathematica Hungarica},
      volume={22},
       pages={349\ndash 353},
}

\bib{lll}{article}{
      author={Spencer, J.},
       title={Asymptotic lower bounds for {R}amsey functions},
        date={1977},
     journal={Discrete Mathematics},
      volume={20},
       pages={69\ndash 76},
}

\end{biblist}
\end{bibdiv}


\end{document}